\title{Negative Translations of Orthomodular Lattices and Their Logic
    }
 \author{Wesley Fussner
\institute{Laboratoire J.A. Dieudonn\'e, CNRS,\\ and Universit\'e C\^ote d'Azur, France\thanks{W. Fussner received funding from the European Research Council (ERC) under the European Union's Horizon 2020 research and innovation program (grant agreement No. 670624).}}
\email{wfussner@unice.fr}
\and
Gavin St.\,John
\institute{Department of Pedagogy, Psychology, Philosophy, \\ Universit\`a degli studi di Cagliari, Italy\thanks{G. St.\,John acknowledges the support of MIUR within the project PRIN 2017: ``Logic and cognition. Theory, experiments, and applications'', CUP: 2013YP4N3, sub-project ``Quantum structures and substructural logics: a unifying approach''.}}
\email{gavinstjohn@gmail.com}
}
\numberwithin{equation}{section}
\theoremstyle{plain}
\newtheorem{theorem}{Theorem}[section]
\newtheorem{lemma}[theorem]{Lemma}
\newtheorem{proposition}[theorem]{Proposition}
\newtheorem{corollary}[theorem]{Corollary}
\theoremstyle{definition}
\newtheorem{definition}[theorem]{Definition}
\newtheorem{caution}[theorem]{Caution}
\crefname{proposition}{Prop.}{Props.}
\crefname{lemma}{Lem.}{Lems.}
\crefname{theorem}{Thm.}{Thms.}
\crefname{corollary}{Cor.}{Cors.}
\Crefname{proposition}{Proposition}{Propositions}
\Crefname{lemma}{Lemma}{Lemmas}
\Crefname{theorem}{Theorem}{Theorems}
\Crefname{corollary}{Corollary}{Corollaries}
\newcommand{\meet}{\wedge}
\newcommand{\join}{\vee}
\newcommand{\under}{\backslash}
\newcommand{\m}{\mathbf}
\newcommand{\eq}{\approx}
\newcommand{\V}{\mathsf{V}}
\newcommand{\rneg}{{\sim}}
\newcommand{\dblr}[1]{\overline{#1}} 
\newcommand{\rdot}{*}
\newcommand\rol{\mathsf{ROL}}
\newcommand\W{\mathsf{W}}
\newcommand\U{\mathsf{U}}
\newcommand\vk{\vdash_{\sf K}}
\renewcommand{\aa}{1/2}
\newcommand{\bb}{3/2}
\newcommand\runder{-\hspace{-.5em}-\hspace{-.45em}\rdot}
\newcommand{\Trs}{\mathrm{T}}
\newcommand{\omlb}[1]{\dblr{#1}}
\renewcommand{\vec}[1]{\mathbf{#1}}
\newcommand{\shook}{\to} 
\newcommand{\cohook}{\odot}
\begin{document}
\maketitle

\begin{abstract}
We introduce \emph{residuated ortholattices} as a generalization of---and environment for the investigation of---orthomodular lattices. We establish a number of basic algebraic facts regarding these structures, characterize orthomodular lattices as those residuated ortholattices whose residual operation is term-definable in the involutive lattice signature, and demonstrate that residuated ortholattices are the equivalent algebraic semantics of an algebraizable propositional logic. We also show that orthomodular lattices may be interpreted in residuated ortholattices via a translation in the spirit of the double-negation translation of Boolean algebras into Heyting algebras, and conclude with some remarks about decidability.
\end{abstract}

\section{Introduction}
Orthomodular lattices have been studied extensively as an algebraic foundation for reasoning in quantum mechanics (see, e.g., \cite{DCGG2004}), and their assertional logic is among the most prominent quantum logics (see, e.g., \cite[p.~483]{F2016}). Regrettably, the algebraic theory of orthomodular lattices suffers from several defects that have inhibited its study. For instance, the variety of orthomodular lattices is not closed under MacNeille completions \cite{H1991} or even canonical completions \cite{H1998}. Because existing proofs that ortholattices have the finite model property invoke the MacNeille completion \cite{B1976}, this presents a significant obstacle in tackling decidability questions. Indeed, it remains an open question whether the equational theory of orthomodular lattices (or, equivalently, their assertional logic) is decidable.

Many of these challenges seem to be due to the orthomodular law itself, whose properties contribute to the underlying difficulty of the previously-mentioned questions. Consequently, one plausible approach to address these questions is to embed orthomodular lattices in an environment that is more amenable from the perspective of completions, decidability, proof theory, and related issues. The present study contributes to research in this direction, introducing \emph{residuated ortholattices} as a candidate for such an amenable environment.

\Cref{sec:algebra} defines and contextualizes residuated ortholattices, and undertakes a preliminary study of their algebraic properties. Notably, we provide several characterizations of orthomodular lattices within this environment in \Cref{sec:term def}. We subsequently establish in \Cref{sec:logic} that (in contrast to ortholattices) residuated ortholattices are the equivalent algebraic semantics of their assertional logic. Finally, in \Cref{sec:translation} we exhibit a double-negation translation of orthomodular lattices into residuated ortholattices. As an application of this translation, we show that the decidability of the equational theory of any of several varieties of residuated ortholattices suffices to guarantee the decidability of the equational theory of the corresponding variety of orthomodular lattices. In particular, if the equational theory of residuated ortholattices is decidable, then so is the equational theory of orthomodular lattices.

\section{From orthomodular lattices to residuated ortholattices}\label{sec:algebra}

We assume familiarity with lattice theory, universal algebra, and algebraic logic, and we invite the reader to consult \cite{BS1981,GJKO2007,F2016} as references on these topics. A \emph{bounded involutive lattice} is a bounded lattice $(A,\meet,\join,0,1)$ equipped with an antitone involution $\neg$. A bounded involutive lattice is called an \emph{ortholattice} (or an \emph{OL}) if it satisfies either of the equivalent identities $x\meet\neg x \eq 0$ or $x\join\neg x \eq 1$,\footnote{As usual in universal-algebraic studies, we use the symbol $\eq$ to denote formal equality.} and an ortholattice ${\m A}$ is called an \emph{orthomodular lattice} (or an \emph{OML}) if it satisfies the quasiequation
$$x\leq y \implies y \eq x\join (y\meet\neg x),$$
where as usual $x\leq y$ abbreviates $x\meet y \eq x$. Equivalently, by replacing $x$ by $x\meet y$, orthomodular lattices may be defined relative to ortholattices by the identity $y\eq (x\meet y)\join (y\meet\neg (x\meet y)).$ Due to their relevance in the logical foundations of quantum mechanics as well as purely algebraic concerns, ortholattices and orthomodular lattices are the subject of an extensive literature; see e.g. \cite{BH2000,DCGG2004} for an overview.

In any bounded involutive lattice $(A,\meet,\join,\neg,0,1)$, we may define two binary operations $\cdot$ and $\shook$ by
$$x\cdot y := x\meet (\neg x\join y),$$
$$x\shook y := \neg x\join (x\meet y),$$
for all $x,y\in A$. The operation $\cdot$ is usually called \emph{Sasaki product},\footnote{Note that some authors denote the term $x\meet (\neg x\join y)$ by $y\cdot x$, while others denote it by $x\cdot y$ as we do here.} and as usual we will often abbreviate $x\cdot y$ by $xy$. The operation $\shook $ is well-known as a candidate for an implication-like operation in OMLs (see, e.g., \cite{MP2003}), and has been called \emph{Sasaki hook} in this context. However, in our more general setting, $\shook $ will not behave as an implication. We caution that neither $\cdot$ nor $\shook $ is associative or commutative in general. 

If ${\m A} = (A,\meet,\join,\neg,0,1)$ is a bounded involutive lattice, we say that $\cdot$ is \emph{residuated}\footnote{Most studies of residuated structures focus on binary operations with both left and right (co-)residuals. Since we consider only structures with a (co-)residual on one side, we will simply use the term \emph{(co-)residuated} for brevity.} provided that there exists a binary operation $\under$ on $A$ such that for all $x,y,z\in A$,
\begin{equation}\tag{R}\label{eq:R}
x\cdot y\leq z \iff y\leq x\under z.
\end{equation}
Dually, we say that $\shook $ is \emph{co-residuated} if there exists a binary operation $\cohook$ on $A$ such that for all $x,y,z\in A$,
\begin{equation}\tag{CoR}\label{eq:CoR}
y\leq x\shook z \iff x \cohook y\leq z.
\end{equation}
Chajda and L\"anger show in \cite{CL2017} that for each orthomodular lattice ${\m A}$, Sasaki product $\cdot$ is residuated and $x\under y = x \shook  y$ for all $x,y\in A$ (and therefore also that $\shook $ is co-residuated and $x \cohook y = x\cdot y$ for all $x,y\in A$). This does not hold for bounded involutive lattices generally, but we may obtain the following.

\begin{proposition}\label{prop:res iff cores}
Let ${\m A}$ be a bounded involutive lattice. The following are equivalent.
\begin{enumerate}[\quad(1)]
\item The operation $\cdot$ is residuated.
\item The operation $\shook $ is co-residuated.
\end{enumerate}
Moreover, if ${\m A}$ is a bounded involutive lattice for which the above equivalent conditions hold, then ${\m A}$ is an ortholattice.
\end{proposition}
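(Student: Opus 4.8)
The plan is to exploit a De Morgan duality between the Sasaki product and the Sasaki hook. Using that $\neg$ is an antitone involution obeying the De Morgan laws, one verifies by direct computation that in any bounded involutive lattice $\m A$,
\[
x \shook y = \neg(x \cdot \neg y) \qquad\text{and}\qquad x \cdot y = \neg(x \shook \neg y)
\]
for all $x, y \in A$. These identities translate order-statements about $\cdot$ into order-statements about $\shook$ with the direction of $\leq$ reversed by $\neg$, which is exactly what is needed to pass between \eqref{eq:R} and \eqref{eq:CoR}.

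For $(1) \Rightarrow (2)$, assume $\cdot$ is residuated with residual $\under$ and set $x \cohook y := \neg(x \under \neg y)$. Then for all $x, y, z \in A$ one has the chain of equivalences
\[
x \cohook y \leq z \iff \neg z \leq x \under \neg y \iff x \cdot \neg z \leq \neg y \iff y \leq \neg(x \cdot \neg z) = x \shook z,
\]
where the first and third steps use antitonicity of $\neg$ together with $\neg\neg a = a$, the middle step uses \eqref{eq:R}, and the last equality is the duality above. Hence $\cohook$ witnesses \eqref{eq:CoR}, so $\shook$ is co-residuated. The converse $(2) \Rightarrow (1)$ is symmetric: given a co-residual $\cohook$ for $\shook$, put $x \under z := \neg(x \cohook \neg z)$ and run the analogous chain, now invoking $\neg(x \cdot y) = x \shook \neg y$ and \eqref{eq:CoR}.

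It remains to prove the final assertion, for which it suffices to use $(1)$. With $\under$ the residual of $\cdot$, instantiating \eqref{eq:R} at $y = 0$ gives $x \cdot 0 \leq z \iff 0 \leq x \under z$; since $0$ is the least element the right-hand side always holds, so $x \cdot 0 \leq z$ for every $z \in A$ and therefore $x \cdot 0 = 0$. But $x \cdot 0 = x \meet (\neg x \join 0) = x \meet \neg x$, so $x \meet \neg x \eq 0$ and $\m A$ is an ortholattice. (Dually, from $(2)$ one instantiates \eqref{eq:CoR} at $z = 1$ to obtain $y \leq x \shook 1$ for all $y$, hence $x \shook 1 = \neg x \join x = 1$.) I do not expect a serious obstacle here: the one genuine idea is spotting the De Morgan duality in the first step, after which both implications are routine involution-chasing; the only point needing a moment's care is the ``moreover'' clause, where one must notice that $x \cdot 0$ simplifies to $x \meet \neg x$, so that the general fact that a residuated operation annihilates $0$ in its residuated argument specializes precisely to the defining identity of ortholattices.
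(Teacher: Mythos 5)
Your proof is correct and follows essentially the same route as the paper: both pass between \eqref{eq:R} and \eqref{eq:CoR} via the De Morgan duality $x\shook y=\neg(x\cdot\neg y)$, defining the co-residual as a negated instance of $\under$, and both obtain the ortholattice identity from $x\cdot 0=x\meet\neg x$ together with $0\leq x\under 0$. The only difference is cosmetic (you write the candidate co-residual as $\neg(x\under\neg y)$ where the paper writes $\neg(y\under\neg x)$ and runs the equivalence chain with the variables swapped), so there is nothing to change.
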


\begin{proof}
Suppose that $\cdot$ is residuated and that $\under$ is its residual. For $x,y\in A$ we define a binary operation $\cohook$ on $A$ by
$$x\cohook y = \neg (y\under\neg x).$$
Now observe that for all $x,y,z\in A$,
\begin{align*}
x\leq y \shook  z &\iff x\leq \neg (y\cdot\neg z)\\
&\iff y\cdot\neg z \leq \neg x\\
&\iff \neg z\leq y\under\neg x\\
&\iff \neg (y\under\neg x) \leq z.\\
&\iff x\cohook y\leq z.
\end{align*}
Thus $\cohook$ is a co-residual for $\shook $. The converse follows by a similar argument, showing that if $\cohook$ is a co-residual for $\shook $, then $x\under y = \neg (x\cohook \neg y)$ defines a residual for $\cdot$.

Now suppose that ${\m A}$ satisfies the equivalent conditions (1) and (2) and let $x\in A$. Direct computation shows that $x\cdot 0 = x\meet\neg x$ for all $x\in A$. On the other hand, $0\leq x\under 0$ implies by residuation that $x\cdot 0=0$. Thus $x\meet \neg x \eq 0$ holds in ${\m A}$, and ${\m A}$ is an ortholattice.
\end{proof}
\begin{definition}
A \emph{residuated ortholattice} (or \emph{ROL}) is an expansion of a bounded involutive lattice $(A,\meet,\join,\neg,0,1)$ by a binary operation $\under$ satisfying (\ref{eq:R}).
\end{definition}
There are many residuated ortholattices that are not OMLs. Table~\ref{tab:card} displays a computer-assisted count (up to isomorphism) of the number of OMLs and ROLs of cardinality at most $12$. By employing the usual methods of residuated structures \cite{GJKO2007}, one may show that the condition (\ref{eq:R}) may be replaced by a finite set of identities, whence residuated ortholattices form a variety. We denote the varieties of OLs, OMLs, and ROLs by $\sf OL$, $\sf OML$, and $\sf ROL$, respectively.

\begin{table}[t]\label{tab:card}
\begin{center}
\begin{tabular}{|c|ccccccccccc}
\hline
$n$ & \multicolumn{1}{c|}{\textbf{2}} & \multicolumn{1}{c|}{\textbf{3}} & \multicolumn{1}{c|}{\textbf{4}} & \multicolumn{1}{c|}{\textbf{5}} & \multicolumn{1}{c|}{\textbf{6}} & \multicolumn{1}{c|}{\textbf{7}} & \multicolumn{1}{c|}{\textbf{8}} & \multicolumn{1}{c|}{\textbf{9}} & \multicolumn{1}{c|}{\textbf{10}} & \multicolumn{1}{c|}{\textbf{11}} & \multicolumn{1}{c|}{\textbf{12}} \\ \hline
\textbf{OMLs} & 1 & 0 & 1 & 0 & 1 & 0 & 2 & 0 & 2 & 0 & 3 \\ \cline{1-1}
\textbf{ROLs} & 1 & 0 & 1 & 0 & 2 & 0 & 4 & 0 & 7 & 0 & 15 \\ \cline{1-1}
\end{tabular}
\end{center}
\caption{The number of OMLs and ROLs of cardinality $n$ up to isomorphism.}
\end{table}

\subsection{Basic properties of residuated ortholattices}
The following lemma is used throughout the sequel. Its proof is straightforward, and we omit it.
\begin{lemma}\label{SasProps}
Let ${\m A}$ be a bounded involutive lattice and let $x,y,z\in A$. Then:
\begin{enumerate}[\quad(1)]
\item If $y\leq z$, then $x\cdot y\leq x\cdot z$ and $x\shook y\leq x\shook z$.
\item $x\meet y \leq x\cdot y \leq x$.
\item $x\cdot x = x$.
\item $x\cdot 1 = 1\cdot x = x$.
\item $0\cdot x=0$ and $x\cdot 0 = x\meet \neg x$.
\item $x\shook 0 = \neg x$.
\item $x\cdot \neg x = \neg x \cdot x = x\meet \neg x$.
\item $x\cdot y=(\neg x \vee y)\cdot x$.
\end{enumerate}
If additionally ${\m A}$ is a residuated ortholattice and $S\subseteq A$, then the following hold:
\begin{enumerate}[\quad(1)]
\setcounter{enumi}{8}
\item $x(x\under y)\leq y$.
\item $x\under x = 1$.
\item If $y\leq z$, then $x\under y \leq x\under z$.
\item If $\bigvee S$ exists in $\m A$, then $\bigvee_{y\in S} xy$ exists in ${\m A}$ and $x\cdot \bigvee S = \bigvee_{y\in S} xy$.
\item If $\bigwedge S$ exists in $\m A$, then $\bigwedge_{y\in S} x\under y$ exists in ${\m A}$ and $x\under \bigwedge S = \bigwedge_{y\in S}x\under y$.
\end{enumerate}
\end{lemma}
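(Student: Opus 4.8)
The plan is to verify each of the thirteen claims in \Cref{SasProps}, most of which follow by straightforward unfolding of the definitions of $\cdot$ and $\shook $ together with basic lattice arithmetic in a bounded involutive lattice. For items (1)--(8), which require only the bounded involutive lattice structure, I would proceed as follows. Item (1) follows from monotonicity of $\meet$ and $\join$ applied to $x\meet(\neg x\join y)$ and $\neg x\join(x\meet y)$. Item (2) is the chain $x\meet y\leq x\meet(\neg x\join y)\leq x$, using $y\leq\neg x\join y$ for the first inequality and $x\meet z\leq x$ for the second. Items (3)--(6) are direct substitutions: for (3), $x\meet(\neg x\join x)=x\meet 1=x$; for (4), $x\meet(\neg x\join 1)=x\meet 1=x$ and $1\meet(\neg 1\join x)=1\meet(0\join x)=x$; for (5), $0\meet(\neg 0\join x)=0$ and $x\meet(\neg x\join 0)=x\meet\neg x$; for (6), $\neg x\join(x\meet 0)=\neg x\join 0=\neg x$. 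Item (7) computes $x\cdot\neg x=x\meet(\neg x\join\neg x)=x\meet\neg x$ and $\neg x\cdot x=\neg x\meet(\neg\neg x\join x)=\neg x\meet x$, using $\neg\neg x=x$. Item (8) observes that $(\neg x\join y)\cdot x=(\neg x\join y)\meet(\neg(\neg x\join y)\join x)=(\neg x\join y)\meet((x\meet\neg y)\join x)=(\neg x\join y)\meet x$ by absorption, which equals $x\cdot y$.

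For items (9)--(13), which assume ${\m A}$ is a residuated ortholattice, I would argue as follows. Item (9) is immediate from (\ref{eq:R}): since $x\under y\leq x\under y$, residuation gives $x\cdot(x\under y)\leq y$. Item (10) follows because $x\cdot x=x\leq x$ by item (3), so $x\leq x\under x$ by residuation; combined with $x\under x\leq 1$ this gives $x\under x=1$. Item (11): if $y\leq z$, then $x\cdot(x\under y)\leq y\leq z$ by item (9) and transitivity, so $x\under y\leq x\under z$ by residuation. Items (12) and (13) are the standard facts that a residuated map preserves existing joins and its residual preserves existing meets; I would prove (12) by noting that $x\cdot y\leq x\cdot\bigvee S$ for each $y\in S$ by item (1), so $x\cdot\bigvee S$ is an upper bound of $\{xy:y\in S\}$, and if $u$ is any upper bound then $xy\leq u$ for all $y\in S$ gives $y\leq x\under u$ for all $y\in S$, hence $\bigvee S\leq x\under u$, hence $x\cdot\bigvee S\leq u$ by residuation; thus $x\cdot\bigvee S=\bigvee_{y\in S}xy$. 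Item (13) is the order-dual argument using $\bigwedge S$, the antitone structure, and residuation.

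I do not anticipate any genuine obstacle here; the statement is explicitly flagged as routine, and indeed the authors omit the proof in the text. The only mild subtlety is remembering in items (9)--(13) to invoke residuation in the correct direction and to handle the possibly-infinite index set $S$ via the universal property of suprema and infima rather than by finitary manipulation. As the excerpt itself states that the proof is straightforward and is omitted, the appropriate ``proof'' to splice in is simply the remark that all items follow by direct computation from the definitions of $\cdot$ and $\shook $, together with (\ref{eq:R}) and the involutive lattice identities, exactly as sketched above.
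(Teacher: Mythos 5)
Your overall plan matches what the authors intend (they omit the proof as routine), and items (1), (2), (4)--(9), (11), (12), (13) are argued correctly. However, two of your justifications do not work as written.

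First, and most importantly, your argument for item (10) is a non sequitur. From $x\cdot x=x\leq x$ and residuation you correctly get $x\leq x\under x$, but combining this with $x\under x\leq 1$ yields only $x\leq x\under x\leq 1$, which does not force $x\under x=1$. The correct one-line argument instantiates residuation differently: by item (4) we have $x\cdot 1=x\leq x$, whence $1\leq x\under x$ and therefore $x\under x=1$. Second, your justification of item (3) uses $\neg x\join x=1$, which is exactly the ortholattice identity and is \emph{not} available in a general bounded involutive lattice (the lemma is stated, and later used, in that generality; e.g.\ the three-element chain with the fixed-point involution is a bounded involutive lattice in which $\neg x\join x\neq 1$ for the middle element). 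The conclusion $x\cdot x=x$ still holds, but the correct reason is absorption: $x\leq\neg x\join x$ gives $x\meet(\neg x\join x)=x$. Both repairs are trivial, but as written these two steps are wrong. The remaining items, including the join- and meet-preservation arguments via the universal property of $\bigvee S$ and $\bigwedge S$, are fine.
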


Sasaki product is not generally associative, but we can establish several weak forms of associativity (compare with \cite{GGN2015}).
\begin{definition}
Let $A$ be a set and let $\star$ be binary operation on $A$. We say that $\star$ is:
\begin{enumerate}[\quad(1)]
\item \emph{left alternative} if $(x\star x)\star y = x\star (x\star y)$ for all $x,y\in A$.
\item \emph{right alternative} if $y\star (x\star x) = (y\star x)\star x$ for all $x,y\in A$.
\item \emph{alternative} if it is both left and right alternative.
\item \emph{flexible} if $(x\star y)\star x = x\star (y\star x)$ for all $x,y\in A$.
\item \emph{power associative} if $\star$ is associative in every 1-generated subalgebra of $(A,\star)$.
\end{enumerate}
\end{definition}

\begin{lemma}\label{prop: idem}
Let ${\m A}$ be a bounded involutive lattice. Then:
\begin{enumerate}[\quad(1)]
\item $\cdot$ is power associative and alternative.
\item $(xy)x\eq xy$.
\item $x(yx)\leq (xy)x$.
\item If $\m A$ also satisfies $x(y\join z)\eq xy\join xz$, then $\cdot$ is flexible. In particular, this holds if ${\m A}$ is a residuated ortholattice.
\end{enumerate}
\end{lemma}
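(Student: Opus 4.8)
My plan is to derive (1)--(3) directly from \Cref{SasProps} and then bootstrap these, together with the distributivity hypothesis, to get (4).

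\emph{Parts (1)--(3).} Power associativity is free: $\cdot$ is idempotent by \Cref{SasProps}(3), so the $(A,\cdot)$-subalgebra generated by a single element $x$ is $\{x\}$, on which associativity is vacuous. Idempotency also reduces left alternativity to the single identity $x\cdot(x\cdot y)=x\cdot y$; writing $x\cdot(x\cdot y)=x\wedge\bigl(\neg x\vee(x\wedge(\neg x\vee y))\bigr)$, the inequality ``$\leq$'' comes from $x\wedge(\neg x\vee y)\leq\neg x\vee y$, and ``$\geq$'' from the fact that $x\wedge(\neg x\vee y)$ lies below both $x$ and $\neg x\vee(x\wedge(\neg x\vee y))$. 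For (2), \Cref{SasProps}(2) gives $x\cdot y\leq x$, and more generally any $a\leq x$ satisfies $a\cdot x=a\wedge(\neg a\vee x)=a$ since $a\leq x\leq\neg a\vee x$; taking $a:=x\cdot y$ yields $(x\cdot y)\cdot x=x\cdot y$. Right alternativity then follows: $y\cdot(x\cdot x)=y\cdot x$ by idempotency, while $(y\cdot x)\cdot x=y\cdot x$ because $y\cdot x\leq\neg y\vee x\leq\neg(y\cdot x)\vee x$ (the first inequality by definition, the second since $y\cdot x\leq y$ by \Cref{SasProps}(2), using that $\neg$ is antitone). For (3), \Cref{SasProps}(2) gives $y\cdot x\leq y$, hence $\neg x\vee(y\cdot x)\leq\neg x\vee y$ and so $x\cdot(y\cdot x)=x\wedge(\neg x\vee(y\cdot x))\leq x\wedge(\neg x\vee y)=x\cdot y$, which equals $(x\cdot y)\cdot x$ by (2).

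\emph{Part (4).} Flexibility asserts $(x\cdot y)\cdot x=x\cdot(y\cdot x)$; by (2) the left-hand side is $x\cdot y$, and by (3) we already have $x\cdot(y\cdot x)\leq x\cdot y$, so everything comes down to the reverse inequality $x\cdot y\leq x\cdot(y\cdot x)$, equivalently---since $x\cdot y\leq x$---to $x\cdot y\leq\neg x\vee(y\cdot x)$. This is exactly where the distributivity hypothesis is needed in an essential way, since a bare monotonicity argument only recovers the inequality of (3). I would treat the case relevant to the paper, in which $\m A$ is an ortholattice---in particular a residuated ortholattice---so that $x\vee\neg x=1$: then distributivity and \Cref{SasProps}(4) give $y=y\cdot 1=y\cdot(x\vee\neg x)=(y\cdot x)\vee(y\cdot\neg x)$, and applying $x\cdot(\,\cdot\,)$ and distributivity once more yields $x\cdot y=\bigl(x\cdot(y\cdot x)\bigr)\vee\bigl(x\cdot(y\cdot\neg x)\bigr)$. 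In view of (3), flexibility is thereby reduced to the single inclusion $x\cdot(y\cdot\neg x)\leq x\cdot(y\cdot x)$, which I expect to be the crux. Its plausibility rests on the observation that $y\cdot\neg x$ is orthogonal to $x$---one computes $(y\cdot\neg x)\wedge x=(y\wedge x)\wedge\neg(y\wedge x)=0$, using that $\m A$ is an ortholattice---so that projecting $y\cdot\neg x$ along $x$ contributes nothing beyond what already appears in $x\cdot(y\cdot x)$; turning this heuristic into a proof requires one further, more delicate, use of distributivity (or, for residuated ortholattices, of the residual $\under$ supplied by \Cref{SasProps}), and the general bounded-involutive-lattice case is handled by a similar analysis. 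Finally, for residuated ortholattices the hypothesis of (4) need not be assumed, as $\cdot$ distributes over arbitrary existing joins by \Cref{SasProps}(12).
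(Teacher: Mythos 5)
Parts (1)--(3) of your argument are correct and follow essentially the paper's route; your derivation of (2) from the general observation that $a\leq x$ implies $a\cdot x=a$ is in fact a little cleaner than the paper's absorption computation. The problem is part (4), where there is a genuine gap: after reducing flexibility to the single inclusion $x\cdot(y\cdot\neg x)\leq x\cdot(y\cdot x)$, you do not prove that inclusion, but only assert that it should follow from the orthogonality $(y\cdot\neg x)\wedge x=0$ plus ``one further, more delicate, use of distributivity.'' That orthogonality is far too weak to carry the day: in a general ortholattice $u\wedge x=0$ does not even force $x\cdot u=0$ (in the ortholattice of Figure~\ref{fig:not flex} one has $x\wedge y=0$ yet $x\cdot y=x$), and monotonicity of $\cdot$ in its second argument only yields $x\cdot(y\cdot\neg x)\leq x\cdot y$, which is the inequality of (3) again rather than the one you need. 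So the crux of (4) is left unproved, and --- modulo your own reduction --- it is equivalent to flexibility itself, so nothing has really been gained.

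There is also a scope issue. Your reduction uses $y=y\cdot(x\vee\neg x)$ and hence the ortholattice law $x\vee\neg x=1$, but the hypothesis of (4) is only that $\m A$ is a bounded involutive lattice satisfying $x(y\join z)\eq xy\join xz$; this does not imply the ortholattice law (the three-element chain $0<a<1$ with $\neg a=a$ satisfies the distributivity identity but has $a\wedge\neg a=a$), and your computation $(y\cdot\neg x)\wedge x=(y\wedge x)\wedge\neg(y\wedge x)=0$ also breaks down there, so the promised ``similar analysis'' for the general case is not available. For comparison, the paper's proof of (4) avoids both problems by a different route: from $xy=x\wedge(\neg x\vee y)\leq(x\vee\neg y)\wedge(\neg x\vee y)\leq(x\vee\neg y)\cdot(\neg x\vee y)$, one application of the distributivity hypothesis together with the identity $u\cdot v=(\neg u\vee v)\cdot u$ of \Cref{SasProps}(8) gives $(x\vee\neg y)\cdot(\neg x\vee y)=(\neg x)(\neg y)\vee yx\leq\neg x\vee yx$, and $xy\leq\neg x\vee yx$ combined with $xy\leq x$ is exactly $xy\leq x(yx)$. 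You would need to supply an argument of comparable substance for your missing inclusion to complete part (4).
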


\begin{proof} (1) The operation $\cdot$ is idempotent by \Cref{SasProps}(3), and idempotency entails power associativity.  Since $\cdot$ is idempotent, we need only verify $xy \eq x(xy)$ to prove left alternativity and $yx \eq (yx)x$ to prove right alternativity. Let $x,y\in A$. Now by \Cref{SasProps}(2) we obtain
$$x  y = x\meet x  y \leq x  (x  y) 
\quad
\&
\quad
(y  x)  x \leq y  x,
 $$
so it suffices to verify the reverse inequalities.

For left alternativity, observe that $x (x y) \leq x y$ if and only if $x (x y)\leq x$ and $x  (x  y)\leq \neg x \vee y$. The former conjunct holds by \Cref{SasProps}(2). For the latter, since $x  y\leq \neg x \vee y$ by definition, we have
$$x  (x  y)=x\meet(\neg x \vee x  y)\leq \neg x \vee x  y \leq \neg x\vee (\neg x\vee y)=\neg x\vee y.$$
For right alternativity, $y  x \leq (y  x)  x$ if and only if $y  x\leq y  x$ and $y  x \leq \neg(y  x)\vee x$. We need only verify the latter inequality since $\leq$ is reflexive. Again, $y  x\leq y$ and $y  x\leq \neg y \join x$ by definition, and since $\neg$ is order reversing, we obtain
$$y  x\leq \neg y \join x \leq \neg(y  x)\join x.$$
It follows that $\cdot$ is both left and right alternative, and hence alternative.

(2) Let $x,y\in A$. Observe that:

$$\begin{array}{r c l l}
(xy)x &=& (x\meet (\neg x\join y))x&\mbox{Definition of $\cdot$}\\ 
&=& (x\meet (\neg x\join y)) \meet [\neg (x\meet (\neg x\join y)) \join x]&\mbox{Definition of $\cdot$}\\
&=& x\meet (\neg x\join y) \meet [\neg x\join (x\meet \neg y) \join x]&\mbox{Involutive lattice properties}\\
&=& x\meet (\neg x\join y) \meet (x\join\neg x)&\mbox{Absorption law}\\
&=& x\meet (\neg x\join y)&\mbox{Absorption law}\\
&=& xy.&\mbox{Definition of $\cdot$}\\
\end{array}$$

(3) Recall that $\cdot$ is isotone in its second coordinate and $yx\leq y$ by \Cref{SasProps}(2). Thus using (2) we have for all $x,y\in A$ that $x(yx)\leq xy = (xy)x$.

(4) By (2) and (3) it is enough to verify $xy\leq x(yx)=x\meet (\neg x \join yx)$. Since $xy\leq x$ by \Cref{SasProps}(2), this is equivalent to showing $xy\leq \neg x \join yx$. Observe that:
$$\begin{array}{r c l l} 
xy &=& x\meet (\neg x \vee y)&\mbox{Definition of $\cdot$}\\
&\leq & (x\join \neg y) \meet (\neg x \vee y) &\mbox{Lattice properties} \\
&\leq& (x\join \neg y) \cdot (\neg x\vee y)&\mbox{\Cref{SasProps}(2)}\\
&=&(x\join\neg y)(\neg x) \vee (x\join \neg y)y&\mbox{Since $x(y\join z)\eq xy\join xz$}\\
&=& (\neg (\neg x)\join\neg y)(\neg x) \vee ( \neg y\vee x)y&\mbox{Involutive lattice properties}\\
&=& (\neg x) (\neg y) \join yx&\mbox{\Cref{SasProps}(8)}\\
&\leq&\neg x \vee yx &\mbox{By \Cref{SasProps}(2)}.
\end{array}
$$
Therefore the claim is settled.
\end{proof}
Note that the hypothesis of \Cref{prop: idem}(4) cannot be dropped, even if ${\m A}$ is assumed to be an ortholattice (see the example depicted in Figure~\ref{fig:not flex}).

\begin{figure}[t]
\centering
\begin{tikzpicture}
    \node[label=right:\tiny{$1=\neg0$}] at (-\aa,\bb) {$\bullet$};
    \node[label=right:\tiny{$y$}] at (-\bb,\aa) {$\bullet$};
    \node[label=right:\tiny{$\neg x$}] at (-\aa,\aa) {$\bullet$};
    \node[label=right:\tiny{$z$}] at (\aa,\aa) {$\bullet$};
    \node[label=right:\tiny{$\neg z$}] at (-\aa,-\aa) {$\bullet$};
    \node[label=right:\tiny{$\neg y$}] at (\aa,-\aa) {$\bullet$};
    \node[label=right:\tiny{$ x$}] at (\bb,-\aa) {$\bullet$};
    \node[label=right:\tiny{$0=\neg 1$}] at (\aa,-\bb) {$\bullet$};

    \draw (-\aa,\bb) -- (-\bb,\aa);
    \draw (-\aa,\bb) -- (-\aa,\aa);
    \draw (-\aa,\bb) -- (\aa,\aa);
    
    \draw (-\aa,-\aa) -- (-\bb,\aa);
    \draw (-\aa,-\aa) -- (-\aa,\aa);
    
    \draw (\aa,-\aa) -- (\aa,\aa);
    \draw (\bb,-\aa) -- (\aa,\aa);
    
    \draw (\aa,-\bb) -- (-\aa,-\aa);
    \draw (\aa,-\bb) -- (\aa,-\aa);
    \draw (\aa,-\bb) -- (\bb,-\aa);
\end{tikzpicture}
\label{fig:not flex}
\caption{The labeled Hasse diagram of an ortholattice whose Sasaki product $\cdot$ is not flexible. E.g., $(xy)x=x\neq 0 = x(yx)$.}
\end{figure}
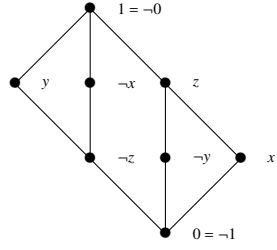

\subsection{Term-definability of residuals}\label{sec:term def}
If ${\m A}$ is an OML, the residual of $\cdot$ is given $\shook $, which is itself definable by a \emph{term} in the language $\{\meet,\join,\neg\}$. This is a remarkable property that OMLs share with Boolean algebras (but generally not other kinds of residuated structures), and has been pursued as another avenue of generalizing OMLs (see \cite{CL2020, F2020}). We will show that OMLs are the only residuated ortholattices with this property. Toward this, we recall the following well-known fact about OLs and OMLs \cite[Proposition 2.1]{BH2000}.
\begin{lemma}\label{lem:forbidden}
Let ${\m A}$ be an ortholattice, and denote by ${\m B}_6$ the ortholattice whose labeled Hasse diagram is depicted in \Cref{fig:benzene}. The following are equivalent.
\begin{enumerate}[\quad(1)]
\item ${\m A}$ is orthomodular.
\item ${\m B}_6$ is not a subalgebra of ${\m A}$.
\end{enumerate}
\end{lemma}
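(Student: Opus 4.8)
The plan is to prove the two implications separately, following the classical argument for this folklore fact (cf.\ \cite{BH2000}), with the substance concentrated in the contrapositive of $(2)\Rightarrow(1)$. For $(1)\Rightarrow(2)$ I would argue contrapositively: ${\m B}_6$ is itself not orthomodular, since, labelling its two maximal chains $0<a<b<1$ and $0<\neg b<\neg a<1$, we have $a\leq b$ but $a\join(b\meet\neg a)=a\join 0=a\neq b$; as the orthomodular law is a quasiequation it is inherited by subalgebras, so no ${\m A}$ having ${\m B}_6$ as a subalgebra can be orthomodular.

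For $(2)\Rightarrow(1)$ I would again argue contrapositively: from a failure of orthomodularity in ${\m A}$, produce an embedded copy of ${\m B}_6$. Fix $u\leq v$ witnessing the failure; since $u\leq v$ and $v\meet\neg u\leq v$ give $u\join(v\meet\neg u)\leq v$, in fact $a:=u\join(v\meet\neg u)<v=:b$. The claim is that $\{0,a,b,1,\neg a,\neg b\}$ is a subalgebra isomorphic to ${\m B}_6$, under the map identifying the chains $0<a<b<1$ and $0<\neg b<\neg a<1$ with the like-named elements. The decisive computation is $\neg a\meet b=0$: writing $w:=v\meet\neg u$ and using De Morgan, $\neg a=\neg u\meet\neg w$, so $\neg a\meet b=\neg u\meet\neg w\meet v=w\meet\neg w=0$. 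Dually $a\meet\neg b\leq b\meet\neg b=0$ since $a\leq b$, and then De Morgan yields $\neg a\join b=\neg(a\meet\neg b)=1$ and $a\join\neg b=\neg(\neg a\meet b)=1$. Together with the ortholattice identities $a\meet\neg a=b\meet\neg b=0$ and $a\join\neg a=b\join\neg b=1$ and the trivial joins and meets along the two chains, this shows the six-element set is closed under $\meet$, $\join$, and $\neg$; the meets just computed also force every ``cross'' pair to be incomparable, so the induced order is exactly the hexagon and the map is an OL-isomorphism onto ${\m B}_6$.

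The one place to be careful is checking that the six elements are distinct. From $a<b$: if $a=0$ then $u=0$, so $a=v=b$; if $b=1$ then $a=u\join\neg u=1=b$; both contradict $a<b$, so $0<a<b<1$ are distinct, and so are their involutive images $1,\neg a,\neg b,0$. Any remaining coincidence ($\neg a\in\{0,1,a\}$ or $\neg a=b$, and symmetrically for $\neg b$) forces $a\leq\neg a$, hence $a=0$ (or the analogous $b=0$), and is excluded the same way. I do not anticipate a real obstacle: the single nonroutine idea is the choice of generators---replacing $u$ by $a=u\join(v\meet\neg u)$---which is exactly what forces the ``benzene'' relation $\neg a\meet b=0$, the relation responsible for the failure of orthomodularity in ${\m B}_6$; the rest is bookkeeping with the De Morgan laws and $x\meet\neg x\eq 0$.
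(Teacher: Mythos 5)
Your proof is correct. The paper itself gives no argument for this lemma---it is quoted as a known fact with a citation to Bruns--Harding \cite[Proposition 2.1]{BH2000}---and what you have written is a complete and accurate rendition of that standard proof: the quasiequational form of orthomodularity passes to subalgebras and visibly fails in the hexagon, while conversely the generators $a=u\join(v\meet\neg u)$ and $b=v$ of a failure are exactly chosen so that $\neg a\meet b=0$, which together with the De Morgan laws and $x\meet\neg x\eq 0$ yields closure, distinctness, and the hexagonal order. The only cosmetic discrepancy is that your labelling of ${\m B}_6$ (chains $0<a<b<1$ and $0<\neg b<\neg a<1$) differs from the one in \Cref{fig:benzene} (chains $0<\neg b<a<1$ and $0<\neg a<b<1$); the two are isomorphic ortholattices, so this is immaterial.
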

Using this fact, we obtain the following.
\begin{figure}[t]
\centering
\begin{tikzpicture}[baseline=(current bounding box.center)] 
    \node[label=right:\tiny{$1=\neg0$}] at (0,0) {$\bullet$};
    \node[label=left:\tiny{$a$}] at (-0.5,-0.5)  {$\bullet$};
    \node[label=left:\tiny{$\neg b$}] at (-0.5,-1)  {$\bullet$};

    \node[label=right:\tiny{$b$}] at (0.5,-0.5) {$\bullet$};
    \node[label=right:\tiny{$\neg a$}] at (0.5,-1) {$\bullet$};
    \node[label=right:\tiny{$0=\neg1$}] at (0,-1.5) {$\bullet$};

    \draw (0,0) -- (-0.5,-0.5);
    \draw (0,0) -- (0.5,-0.5);   
    \draw (-0.5,-1) -- (0,-1.5);
    \draw (0.5,-1) -- (0,-1.5);
    \draw (0.5,-0.5) -- (0.5,-1);
    \draw (-0.5,-0.5) -- (-0.5,-1);
\end{tikzpicture}
\quad\quad
\begin{tabular}{|c|cccccc}
\hline
$\under$ & \multicolumn{1}{c|}{$0$} & \multicolumn{1}{c|}{$\neg a$} & \multicolumn{1}{c|}{$\neg b$} & \multicolumn{1}{c|}{$a$} & \multicolumn{1}{c|}{$b$} & \multicolumn{1}{c|}{$1$} \\ \hline
$0$ & $1$ & $1$ & $1$ & $1$ & $1$ & $1$ \\ \cline{1-1}
$\neg a$ & $a$ & $1$ & $a$ & $a$ & $1$ & $1$ \\ \cline{1-1}
$\neg b$ & $b$ & $b$ & $1$ & $1$ & $b$ & $1$ \\ \cline{1-1}
$a$ & $b$ & $b$ & $b$ & $1$ & $b$ & $1$ \\ \cline{1-1}
$b$ & $a$ & $a$ & $a$ & $a$ & $1$ & $1$ \\ \cline{1-1}
$1$ & $0$ & $\neg a$ & $\neg b$ & $a$ & $b$ & $1$ \\ \cline{1-1}
\end{tabular}
\caption{The forbidden configuration ${\m B}_6$, also called \emph{Benzene}, that witnesses the failure of the orthomodular law.}
\label{fig:benzene}
\end{figure}
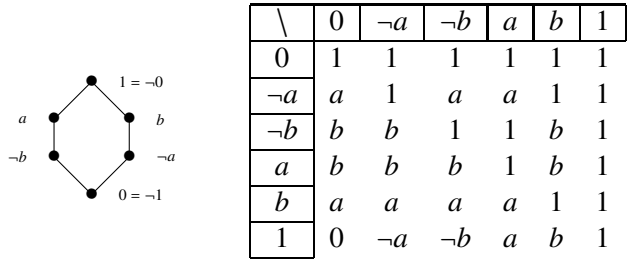
\begin{theorem}\label{thm:term definable}
Let ${\sf V}$ be a subvariety of $\sf ROL$ such that $\under$ is definable in $\sf V$ by a term in the language $\{\meet,\join,\neg,0,1\}$. Then ${\sf V}$ is a variety of OMLs.
\end{theorem}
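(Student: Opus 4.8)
The plan is to argue by contraposition, using the forbidden-configuration characterization of orthomodularity from \Cref{lem:forbidden} together with the uniqueness of residuals. Assume ${\sf V}$ is not a variety of OMLs, and fix $\m A\in{\sf V}$ witnessing this. By \Cref{prop:res iff cores} the involutive-lattice reduct of $\m A$ is an ortholattice, and by assumption it is not orthomodular, so \Cref{lem:forbidden} provides an embedding of the ortholattice ${\m B}_6$ of \Cref{fig:benzene} into that reduct. Let $t(x,y)$ be a term in $\{\meet,\join,\neg,0,1\}$ with ${\sf V}\models x\under y\eq t(x,y)$. The image of ${\m B}_6$ in $\m A$, being closed under $\meet,\join,\neg,0,1$, is also closed under the term operation $t^{\m A}$, and the restriction of $t^{\m A}$ to it is exactly the term operation $t^{{\m B}_6}$; hence the expansion of ${\m B}_6$ by $t^{{\m B}_6}$ is a subalgebra of $\m A$ in the signature of ROLs. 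Since ${\sf V}$ is a variety, this expansion lies in ${\sf V}\subseteq{\sf ROL}$, so $t^{{\m B}_6}$ satisfies (\ref{eq:R}); that is, $t^{{\m B}_6}$ is a residual for Sasaki product on ${\m B}_6$.

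Next I would use that a residual, when it exists, is uniquely determined --- (\ref{eq:R}) forces $x\under z=\bigvee\{y:x\cdot y\leq z\}$ --- so $t^{{\m B}_6}$ must coincide with the operation $\under$ tabulated in \Cref{fig:benzene}. It then remains to reach a contradiction by showing this operation $\under$ is \emph{not} a term operation of the ortholattice $({\m B}_6;\meet,\join,\neg,0,1)$. I would do this by exhibiting a congruence of that ortholattice with which $\under$ is incompatible, invoking the standard fact that every term operation of an algebra respects each of its congruences. Concretely, identifying the element $a$ with $\neg b$ (and hence, by compatibility with $\neg$, identifying $b$ with $\neg a$) determines a partition $\theta$ of the underlying set of ${\m B}_6$ into four blocks; a routine check confirms that $\theta$ is an ortholattice congruence with ${\m B}_6/\theta$ isomorphic to the four-element Boolean algebra. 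However $\under$ does not respect $\theta$: one has $a \mathrel{\theta} \neg b$, while $a\under\neg b=b$ and $\neg b\under\neg b=1$ (the latter by \Cref{SasProps}(10)), and $b$ and $1$ lie in distinct blocks of $\theta$. This contradiction shows that no such term $t$ can exist once ${\sf V}$ contains a non-orthomodular member, so ${\sf V}$ is a variety of OMLs.

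The step I expect to be the crux is the choice of $\theta$. The tempting shortcut --- hoping that ${\m B}_6$ is simply not a ROL, so that term-definability fails trivially --- is unavailable, since ${\m B}_6$ \emph{is} a ROL, as the table in \Cref{fig:benzene} already indicates; what fails on ${\m B}_6$ is not the existence of a residual but its expressibility by an ortholattice term. Locating a structural invariant that the ortholattice operations preserve but Sasaki implication does not is thus the heart of the matter; once $\theta$ is in hand, verifying that it is a congruence and that $\under$ violates it is a short finite computation, as is checking the entry $a\under\neg b=b$ against (\ref{eq:R}).
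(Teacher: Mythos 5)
Your proposal is correct and follows essentially the same route as the paper's proof: embed ${\m B}_6$ via \Cref{lem:forbidden}, use uniqueness of the residual to identify $t^{{\m B}_6}$ with the tabulated $\under$, and derive a contradiction from the fact that the ortholattice congruence generated by $(a,\neg b)$ is not respected by $\under$ (your explicit four-block partition is exactly that congruence, and your witness $a\under\neg b=b$ versus $\neg b\under\neg b=1$ checks out against the table). The only difference is that you spell out the verifications the paper leaves as ``easy to see.''
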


\begin{proof}
Let $t(x,y)$ be a term in the language $\{\meet,\join,\neg,0,1\}$ such that ${\sf V}$ satisfies $t(x,y)\eq x\under y$, and toward a contradiction assume that $\sf V$ is not a variety of OMLs. Then there exists ${\m A}\in\sf V$ such that ${\m A}$ is not orthomodular, and by \Cref{lem:forbidden} we have that ${\m B}_6$ is a subalgebra of ${\m A}$ in the signature $\{\meet,\join,\neg,0,1\}$. Since $t(x,y)$ is a term in the language $\{\meet,\join,\neg,0,1\}$, we have that $t(x,y)$ defines a residual in the $\{\meet,\join,\neg,0,1\}$-subalgebra ${\m B}_6$. Because the residual of $\cdot$ is uniquely-determined when it exists, it follows that $t(x,y)$ is a term defining the operation $\under$ given in the table of \Cref{fig:benzene}. Note that every ortholattice congruence of ${\m B}_6$ respects the term $t(x,y)$, whence that every ortholattice congruence is a congruence for $\under$ as well. However, it is easy to see the ortholattice congruence generated by $(a,\neg b)$ does not respect $\under$. It follows that the residual of ${\m B}_6$ is not term-definable, a contradiction.
\end{proof}

\begin{proposition}\label{prop:residuated implies ortholattice}
Let ${\m A}$ be a bounded involutive lattice. The following are equivalent.
\begin{enumerate}[\quad(1)]
\item ${\m A}$ is an OML.
\item ${\m A}$ satisfies the quasiequation $x\leq y\implies y\cdot x\eq x$.
\item ${\m A}$ satisfies the identity $x\cdot (x\shook y)\leq y$.
\item ${\m A}$ is an ROL and ${\m A}$ satisfies the identity $x\cdot y \eq  x \odot y$, where $\odot$ is the co-residual of $\shook$.
\item ${\m A}$ is an ROL and ${\m A}$ satisfies the identity $x\under y \eq x \shook  y$.
\item ${\m A}$ is an ROL and $\under$ is definable by a term in the language $\{\meet,\join,\neg,0,1\}$.
\end{enumerate}
\end{proposition}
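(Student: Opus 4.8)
The plan is to establish the cycle of implications $(1)\Rightarrow(2)\Rightarrow(3)\Rightarrow(4)\Rightarrow(5)\Rightarrow(6)\Rightarrow(1)$, peeling off the easier links first and saving the genuinely structural step for last. The equivalences $(1)\Leftrightarrow(2)$ and $(2)\Leftrightarrow(3)$ are essentially bookkeeping: condition $(2)$ is just the identity-form of the orthomodular quasiequation obtained by substituting $x\meet y$ for $x$ in $x\leq y\implies y\eq x\join(y\meet\neg x)$ and applying \Cref{SasProps}(8), since $y\cdot x=(\neg x\join y)\cdot x$ when $x\leq y$ amounts to $x\join(y\meet\neg x)=y$ after unwinding the involutive-lattice algebra; and $(3)$ follows from $(2)$ by noting that $x\meet y\leq x$ so $(2)$ applied to the pair $(x\meet y, x)$ together with the definition $x\shook y=\neg x\join(x\meet y)$ and \Cref{SasProps} yields $x\cdot(x\shook y)\leq y$, while conversely $(3)$ plus $x\leq y$ forces $y\cdot x=x$ by squeezing between $x=x\cdot x\leq y\cdot x$ and $y\cdot x\leq x$.

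**The residuation core.** The heart of the argument is $(3)\Rightarrow(4)$: I would show that $(3)$ implies $\cdot$ is residuated with residual precisely $\shook$. Given $(3)$, I claim $x\cdot y\leq z \iff y\leq x\shook z$. The forward direction: if $x\cdot y\leq z$ then since $y\leq \neg x\join(x\meet y)$? — more carefully, one uses isotonicity of $\shook$ in its second argument (\Cref{SasProps}(1)) together with $y\leq x\shook(x\cdot y)$, which itself must be checked directly from the definitions of $\shook$ and $\cdot$ in a bounded involutive lattice (this is the absorption-style computation $y\meet x\leq x\cdot y$ hence $\neg x\join(x\meet y)\geq\ldots$; in fact $y\le \neg x\join (x\wedge y)$ fails in general, so the correct route is: $x\cdot y \le z$, apply $x\shook(-)$ and use $x\shook(x\cdot y)\ge y$, the latter being an OL-identity). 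The backward direction is exactly $(3)$: if $y\leq x\shook z$ then $x\cdot y\leq x\cdot(x\shook z)\leq z$ by \Cref{SasProps}(1) and $(3)$. So $\m A$ is an ROL with $\under=\shook$; then \Cref{prop:res iff cores} gives the co-residual $\odot$ of $\shook$, and by the uniqueness of residuals/co-residuals together with the formula $x\odot y=\neg(x\under\neg y)$ from \Cref{prop:res iff cores}'s proof, one computes $x\odot y=\neg(x\shook\neg y)=\neg(\neg x\join(x\meet\neg y))=x\meet(\neg x\join y)=x\cdot y$, giving $(4)$. Essentially the same identification gives $(4)\Rightarrow(5)$ and $(5)\Rightarrow(4)$ directly, since $x\under y=x\shook y$ and $x\cdot y=x\odot y$ are interderivable via the negation dualities of \Cref{prop:res iff cores}.

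**Closing the loop.** For $(5)\Rightarrow(6)$ there is nothing to do: $x\shook y=\neg x\join(x\meet y)$ is manifestly a term in $\{\meet,\join,\neg,0,1\}$. The implication $(6)\Rightarrow(1)$ is precisely \Cref{thm:term definable}: any subvariety of $\sf ROL$ in which $\under$ is term-definable in the involutive-lattice signature consists of OMLs, and a single algebra $\m A$ generates such a subvariety $\V(\m A)$, so $\m A$ is an OML. Finally $(4)\Rightarrow(1)$ can be obtained for free as a shortcut if desired, but routing through $(5)$ and $(6)$ and invoking \Cref{thm:term definable} is cleanest.

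**Expected obstacle.** The delicate point is verifying, purely from the bounded-involutive-lattice definitions of $\cdot$ and $\shook$ and hypothesis $(3)$, the inequality $y\leq x\shook(x\cdot y)$ (equivalently that $\shook$ recovers a residual from $(3)$). One must resist the temptation to assume $y\le \neg x\vee(x\wedge y)$, which is false in general; the right identity to use is that $x\shook z = \neg x \vee (x\wedge z)$ is the co-residual dual of Sasaki product and the needed inequality follows from the ortholattice identity $x\cdot(x\shook z)\le z$ being \emph{equivalent} to $z'\le x\shook(x\cdot z')$ under the Galois-connection bookkeeping — so the work is to show $(3)$ literally is the residuation law \eqref{eq:R} with $\under:=\shook$, after which everything else is routine unwinding and appeals to \Cref{SasProps}, \Cref{prop:res iff cores}, and \Cref{thm:term definable}.
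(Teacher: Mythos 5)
Your decomposition agrees with the paper's on the outer links ($(1)\Leftrightarrow(2)\Leftrightarrow(3)$, $(5)\Rightarrow(6)$, and $(6)\Rightarrow(1)$ via \Cref{thm:term definable}), but differs substantively in the middle: the paper gets from $(3)$ to $(5)$ by first proving $(3)\Rightarrow(1)$ and then citing Chajda--L\"anger \cite{CL2017} for the fact that every OML is residuated with $\under=\shook$, whereas you derive the residuation law \eqref{eq:R} with $\under:=\shook$ directly from hypothesis $(3)$. Your route is sound and arguably preferable, since it makes the proof self-contained; the mechanism that makes it work is the De Morgan identity $x\shook z=\neg(x\cdot\neg z)$, valid in any bounded involutive lattice, under which the counit inequality $x\cdot(x\shook z)\leq z$ (for all $z$) and the unit inequality $y\leq x\shook(x\cdot y)$ (for all $y$) are interchanged by the substitution $z\mapsto\neg y$. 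That is exactly the ``Galois-connection bookkeeping'' you gesture at in your closing paragraph, and it closes the loop.

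One correction is needed, though: your parenthetical claim that $y\leq x\shook(x\cdot y)$ is ``an OL-identity'' is false. In ${\m B}_6$ with $x=a$, $y=b$ one has $a\cdot b=a\meet(\neg a\join b)=a\meet b=0$, so $a\shook(a\cdot b)=a\shook 0=\neg a$, and $b\not\leq\neg a$. The inequality is \emph{not} a consequence of the ortholattice axioms; it is equivalent to hypothesis $(3)$ via the involution as described above, and must be justified that way. (For monotone maps in general, neither half of an adjunction implies the other; it is only the special shape $x\shook(-)=\neg\circ(x\cdot(-))\circ\neg$ that makes the two halves interderivable here.) A second, minor point: in your $(3)\Rightarrow(2)$ step the inequality $x=x\cdot x\leq y\cdot x$ cannot be obtained from monotonicity, since $\cdot$ is only isotone in its second argument; the correct (and easy) argument is the paper's $x=y\meet x\leq y\meet(\neg y\join x)=y\cdot x$. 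With these two repairs your proposal is a complete and slightly more elementary proof than the one in the paper.
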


\begin{proof} (1) is easily seen to be equivalent to (2) from the quasiequation defining orthomodularity. If ${\m A}$ is an OML, then $x\cdot (x\shook y)\leq y$ follows because $\shook $ is an upper adjoint for $\cdot$ by \cite{CL2017}. Conversely, if (3) holds then ${\m A}$ can readily be seen to be an ortholattice since, by \Cref{SasProps},
$$x\meet \neg x = x\cdot \neg x = x\cdot(x\shook 0)\leq 0, $$
and hence $x\meet \neg x = 0$. To show that ${\m A}$ is orthomodular, suppose that $x,y\in A$ with $x\leq y$. On the one hand, $x= y\land x$ and we obtain $x = y\land x \leq y\meet (\neg y\join x) = y\cdot x$. On the other hand,  $x\leq \neg y \join x = \neg y \join (y \meet x) = y\shook  x.$ By the hypothesis and the fact that $\cdot$ preserves the order on the right, we get $ y\cdot x \leq y\cdot (y\shook x) \leq x$. Therefore $x = y\cdot x$. It follows that (1), (2), and (3) are equivalent.

(4) and (5) are readily seen to be equivalent to one another. (5) implies (3) follows because $x(x\under y)\leq y$ in any ROL, whereas the converse comes from \cite{CL2017}. Thus items (1) through (5) are equivalent, and (6) is equivalent to these as an immediate consequence of \Cref{thm:term definable} and \cite{CL2017}.
\end{proof}

\section{Congruence regularity and the logic of residuated ortholattices}\label{sec:logic}
Let $\sf K$ be a class of algebras of common similarity type $\mathcal{L}$. Recall that the \emph{relative equational consequence of ${\sf K}$} is the relation $\models_{\sf K}$ from sets of $\mathcal{L}$-equations to $\mathcal{L}$-equations defined by $E\models_{\sf K} s\eq t$ if and only if for every ${\m A}\in\sf K$ and every tuple $\vec a$ assigning elements to the variables appearing in $E\cup\{s\eq t\}$, if $u^{\m A}({\vec a})=v^{\m A}({\vec a})$ for all $(u\eq v)\in E$ then $s^{\m A}({\vec a})=t^{\m A}({\vec a})$. If further $\mathcal{L}$ contains a constant symbol $1$, the \emph{assertional logic of $\sf K$} (see \cite[Definition 3.5]{F2016}) is the logic $(\mathcal{L},\vk)$, where $\vk$ is the relation from sets of $\mathcal{L}$-formulas to $\mathcal{L}$-formulas given by
$$\Gamma\vk\varphi \iff \{\gamma\eq 1 : \gamma\in\Gamma\}\models_{\sf K}\varphi\eq 1.$$
The assertional logic of $\sf OL$ is a textbook example of a logic that is weakly algebraizable but not algebraizable \cite[Example 6.122.5]{F2016}. This defect is related to the structure of congruences of OLs. Recall that an algebra ${\m A}$ with a constant $1$ is said to be $1$-regular if for any congruences $\theta,\psi$ of $\m A$ we have that $[1]_\theta=[1]_\psi$ implies $\theta=\psi$, where $[a]_\theta$ denotes the $\theta$-congruence class of $a\in A$. A variety $\V$ whose language has a designated constant $1$ is said to be \emph{$1$-regular} if all of the algebras in $\V$ are $1$-regular. It is well-known \cite[Proposition 4.3]{BH2000} that $\sf OML$ is $1$-regular, and that the assertional logic of every $1$-pointed, $1$-regular variety is algebraizable in the sense of Blok and Pigozzi (see \cite[Theorem 6.146]{F2016} and \cite{BP1989}).

\begin{lemma}\label{lem:regularity}
The variety $\rol$ of residuated ortholattices is $1$-regular.
\end{lemma}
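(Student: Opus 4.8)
The plan is to exhibit an explicit \emph{ternary deductive term} (or equivalently a pair of witnessing binary terms) that detects congruence from the class of $1$: concretely, to find a term $t(x,y)$ in the language of $\sf ROL$ such that, in every ${\m A}\in\sf ROL$ and for all $a,b\in A$, one has $(a,b)\in\theta$ if and only if $t(a,b)\mathrel{\theta}1$, where $\theta$ ranges over congruences. Since residuation gives us $\under$, the natural candidate is a biconditional-style term built from the residuals, e.g. $t(x,y) := (x\under y)\meet(y\under x)$, possibly symmetrized further as $(x\under y)\cdot(y\under x)$ or $\neg(\ldots)$ to get the polarity right. The key facts one needs are: (i) $x\under x = 1$ (\Cref{SasProps}(10)), so $t(a,a)=1$; and (ii) if $t(a,b)=1$ then $a=b$ in any \emph{simple} or suitably pointed quotient — more precisely, that $t(a,b)=1$ forces $a\leq b$ and $b\leq a$ via the residuation law $x\cdot y\leq z\iff y\leq x\under z$ together with \Cref{SasProps}(9), $x(x\under y)\leq y$.

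The main steps, in order, are as follows. First, verify that $\{1\}$ is closed under the basic operations in the precise sense needed, and record the arithmetic of $\under$ at $1$: namely $1\under y = y$ and $x\under 1 = 1$, both immediate from (\ref{eq:R}) and \Cref{SasProps}(4),(10). Second, show the \emph{reflexivity} half: $t(a,a)=1$ for the chosen $t$. Third, and this is the crux, show the \emph{separation} half: if $\theta$ is a congruence of ${\m A}\in\sf ROL$ and $t(a,b)\mathrel\theta 1$, then $a\mathrel\theta b$. The clean way is to pass to the quotient ${\m A}/\theta$: there $\overline t(\bar a,\bar b)=\bar 1$, so it suffices to prove the implication $t(a,b)=1\Rightarrow a=b$ outright in every ROL. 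Using $t(a,b)=(a\under b)\meet(b\under a)=1$ we get $a\under b = 1$ and $b\under a=1$; then by (\ref{eq:R}), $a\under b=1$ together with $1\leq a\under b$ being equivalent to $a\cdot 1\leq b$, i.e. $a\leq b$ by \Cref{SasProps}(4); symmetrically $b\leq a$; hence $a=b$. Fourth, conclude via the standard fact that admitting such a term witnessing $[1]_\theta$-determinacy of $\theta$ is equivalent to $1$-regularity (this is the ``$1$-regular iff there is a binary term $t$ with $t(x,x)\eq 1$ and $t(x,y)\eq 1\,\&\,t(y,x)\eq 1 \Rightarrow x\eq y$''-style characterization; cf.\ the Fichtner/Csákány-type criterion, or argue directly that $[1]_\theta$ determines $\theta$ because $(a,b)\in\theta\iff(t(a,b),1)\in\theta$).

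I expect the genuine obstacle to be pinning down the correct term and the correct direction of the inequalities, since $\cdot$ and $\under$ are neither commutative nor associative, so the ``obvious'' symmetric biconditional $(x\under y)\meet(y\under x)$ must be checked to actually separate points rather than merely being reflexive; in particular one must be careful that $t(a,b)=1$ really yields \emph{both} $a\leq b$ and $b\leq a$ and not some weaker Sasaki-type inequality. A secondary, more bureaucratic, point is to state the congruence-regularity criterion in a form that applies here — either cite the Blok--Pigozzi / Fichtner characterization of $1$-regular (equivalently, ``$1$-permutable with a Mal'cev-like term'') varieties and check its hypotheses, or give the two-line direct argument that for congruences $\theta,\psi$ with $[1]_\theta=[1]_\psi$ we have $(a,b)\in\theta\iff t(a,b)\in[1]_\theta=[1]_\psi\iff(a,b)\in\psi$. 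Once the separating term is in hand, everything else is routine, and the proof is short. (One may alternatively bypass a new term entirely by noting that $\sf ROL$ contains the term operations of $\sf OL$ plus $\under$, and that $x\under y$ together with $\neg$ already yields a Mal'cev-style difference term; but the explicit biconditional above is the most transparent route.)
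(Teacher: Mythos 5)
Your proposal is correct and rests on the same witnessing data as the paper's proof: the terms $x\under y$ and $y\under x$, the identity $x\under x\eq 1$, and the residuation fact that $x\under y\eq 1$ forces $x\leq y$ (equivalently, $x(x\under y)\leq y$). The paper simply carries out the ``two-line direct argument'' you sketch at the end---chasing $(1,x\under y)$ and $(1,y\under x)$ from $[1]_\theta$ into $[1]_\psi$ and recovering $(x,y)\in\psi$ via $x(x\under y)\join y=y$---rather than packaging the terms into a single biconditional $t(x,y)=(x\under y)\meet(y\under x)$ and invoking a Fichtner-style criterion, so the two proofs differ only in bookkeeping.
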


\begin{proof}
Let ${\m A} = (A,\meet,\join,\neg,\under,0,1)$ be a residuated ortholattice and suppose that $\theta,\psi$ are congruences of ${\m A}$ such that $[1]_\theta= [1]_\psi$. Let $(x,y)\in\theta$. Then by applying the fact that $\theta$ is a congruence for $\under$ and $(x,x),(y,y)\in\theta$, we have that $(x\under x,x\under y),(y\under y,y\under x)\in\theta$, i.e., $(1,x\under y),(1,y\under x)\in\theta$. It follows from the hypothesis that $(1,x\under y), (1,y\under x)\in\psi$. Since $\psi$ is a congruence for $\meet,\join,\neg$, $\psi$ is also a congruence for $\cdot$. Hence it follows that $(x,x(x\under y)),(y,y(y\under x))\in\psi$ as well. Now because $x(x\under y)\join y = y$ and $y(y\under x)\join x = x$, it follows that
$(x\join y, y)\in\psi$ and $(x,x\join y)\in\psi$. By transitivity, we obtain that $(x,y)\in\psi$ and $\theta\subseteq\psi$. A symmetric argument shows that $\psi\subseteq\theta$, whence $\theta=\psi$. 
\end{proof}
The following is an immediate corollary of \Cref{lem:regularity} and the preceding remarks. It demonstrates that the logical deficiencies of $\sf OL$ (as compared to $\sf OML$) can be ameliorated by the expressive power afforded by adding a residual.
\begin{theorem}\label{thm:algebraizable}
The assertional logic of $\rol$ is algebraizable in the sense of Blok and Pigozzi and its equivalent algebraic semantics is $\rol$.
\end{theorem}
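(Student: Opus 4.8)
The plan is to derive this as a direct consequence of the general theory of algebraizability together with \Cref{lem:regularity}. The key fact, recalled in the remarks preceding \Cref{lem:regularity}, is that for any variety $\V$ whose language contains a designated constant $1$, if $\V$ is $1$-regular then the assertional logic of $\V$ is algebraizable in the sense of Blok and Pigozzi with $\V$ as its equivalent algebraic semantics (see \cite[Theorem 6.146]{F2016} and \cite{BP1989}). Since $\rol$ is a variety whose language contains the constant $1$, and since \Cref{lem:regularity} establishes that $\rol$ is $1$-regular, the desired conclusion follows immediately by instantiating this general principle with $\V = \rol$.

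In slightly more detail, the argument I would record runs as follows. First, observe that $\rol$ is $1$-pointed (its similarity type includes the constant $1$) and, by \Cref{lem:regularity}, $1$-regular. Second, invoke the cited characterization: the assertional logic $(\mathcal{L}, \vrol)$, where $\mathcal{L}$ is the similarity type of residuated ortholattices and $\vrol$ is defined by $\Gamma \vrol \varphi \iff \{\gamma \eq 1 : \gamma \in \Gamma\} \models_{\rol} \varphi \eq 1$, is algebraizable, with defining equation $x \eq 1$ and equivalence formula(s) witnessing the algebraizability; the quasivariety generated by $\rol$—which is $\rol$ itself, as it is a variety—is its equivalent algebraic semantics. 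One could optionally make the equivalence formulas explicit: since $\rol$ is $1$-regular via the terms $x \under y$ and $y \under x$ (as used in the proof of \Cref{lem:regularity}), the pair $\{x \under y,\; y \under x\}$ serves as a set of equivalence formulas, in the sense that $x \eq y$ is interderivable with $\{x \under y \eq 1,\; y \under x \eq 1\}$ relative to $\rol$. This makes the Blok–Pigozzi algebraizability completely concrete.

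I do not anticipate a genuine obstacle here, since the theorem is essentially a corollary: all the real work has already been done in \Cref{lem:regularity}. The only point requiring a modicum of care is verifying that the hypotheses of the general theorem are met—namely that $\rol$ is a \emph{variety} (so that its associated quasivariety, the equivalent algebraic semantics, coincides with $\rol$ itself rather than some proper subquasivariety) and that the constant $1$ is indeed the distinguished truth constant of the assertional logic. Both are immediate: $\rol$ was shown earlier in \Cref{sec:algebra} to be a variety (condition (\ref{eq:R}) is equivalent to a finite set of identities), and the assertional logic of $\rol$ is by definition built using $1$. Hence the proof is a one-line appeal to the cited results, and the write-up should simply state that \Cref{thm:algebraizable} is an immediate corollary of \Cref{lem:regularity} together with the standard fact that the assertional logic of a $1$-pointed, $1$-regular variety is algebraizable with that variety as its equivalent algebraic semantics.
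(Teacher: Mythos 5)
Your proposal is correct and follows exactly the paper's route: the theorem is stated there as an immediate corollary of \Cref{lem:regularity} together with the cited fact that the assertional logic of a $1$-pointed, $1$-regular variety is algebraizable with that variety as its equivalent algebraic semantics. Your additional remark identifying $\{x\under y,\ y\under x\}$ as explicit equivalence formulas is a correct, if optional, elaboration.
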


Among other consequences, this entails that the lattice of axiomatic extensions of the assertional logic of $\rol$ is dually isomorphic to the lattice of subvarieties of $\rol$. Although one could extract a syntactic presentation of the assertional logic of $\rol$ (e.g., providing a Hilbert-style calculus), we do not further address this issue in the present paper.

\section{A negative translation and relative decidability}\label{sec:translation}
As a final topic for this paper, we exhibit a negative translation of $\sf OML$ into $\sf ROL$ inspired by \cite{GO2006}. For this, we will need a number of preliminary lemmas.

\subsection{Preliminaries to the translation}

\begin{lemma}\label{SasRes}
Let $\m A$ be a residuated ortholattice. Then for all $x,y\in A$:
\begin{enumerate}[\quad(1)]
\item $\neg x \leq x\under y$.
\item $\neg(x\under y)\leq x \leq (\neg x)\under y$.
\item $(x\under y)x=x\meet x\under y =x(x\under y)$.
\item $x\meet x\under y \leq y$.
\item $x\under (x\meet y) = x\under y$.
\item $xy=x(yx)$.
\item $xy=0$ if and only if $yx=0$.
\end{enumerate}
\end{lemma}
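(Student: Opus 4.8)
The plan is to prove the seven items in the stated order, each building on the residuation adjunction (R) and the basic Sasaki facts from \Cref{SasProps} and \Cref{prop: idem}. For item (1), I would start from $0 \leq x \under y$, which holds by residuation since $x \cdot 0 = x \meet \neg x \leq 0$ in an ROL (it is an ortholattice by \Cref{prop:res iff cores}); then apply isotonicity of $x \under (-)$ from \Cref{SasProps}(11) together with $x \under 0 = \neg(0 \under \neg x)$... actually more directly, since $x \cdot \neg x = x \meet \neg x = 0 \leq y$, residuation gives $\neg x \leq x \under y$ immediately. For item (2): the right inequality $x \leq (\neg x) \under y$ follows the same way from $\neg x \cdot x = x \meet \neg x = 0 \leq y$; the left inequality $\neg(x \under y) \leq x$ is obtained by applying $\neg$ to item (1) after substituting, i.e. from $\neg x \leq x \under y$ with $x$ replaced by $\neg x$ we get $\neg \neg x = x \leq \neg x \under y$, and dualizing item (1) appropriately — I would spell this out via the antitonicity of $\neg$ applied to $\neg x \leq x \under (\text{something})$, or simply note $\neg(x\under y) \leq \neg \neg x = x$ using item (1).

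For item (3), I would prove $(x \under y)x = x \meet (x \under y)$ by a computation like that of \Cref{prop: idem}(2): unfolding $(x \under y) x = (x \under y) \meet (\neg(x \under y) \join x)$ and using item (2) ($\neg(x \under y) \leq x$) so that $\neg(x\under y) \join x = x$, giving $(x \under y) \meet x$. Then $x(x \under y) = x \meet (\neg x \join (x \under y)) = x \meet (x \under y)$ using item (1) ($\neg x \leq x \under y$ so $\neg x \join (x \under y) = x \under y$). Item (4), $x \meet (x \under y) \leq y$, is then immediate since $x \meet (x \under y) = x(x \under y) \leq y$ by \Cref{SasProps}(9). For item (5), $x \under (x \meet y) = x \under y$: one inclusion is isotonicity (\Cref{SasProps}(11)); for the other, $x(x \under y) \leq y$ and $x(x \under y) \leq x$ (by \Cref{SasProps}(2)) give $x(x\under y) \leq x \meet y$, so residuation yields $x \under y \leq x \under (x \meet y)$.

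Item (6), $xy = x(yx)$: the inequality $x(yx) \leq xy$ holds since $yx \leq y$ (\Cref{SasProps}(2)) and $\cdot$ is isotone on the right. For $xy \leq x(yx)$ I would use flexibility from \Cref{prop: idem}(4) together with \Cref{prop: idem}(2) — indeed \Cref{prop: idem}(4)'s proof already establishes $xy \leq x(yx)$ for ROLs, so this is essentially a restatement; I will cite \Cref{prop: idem}(2)--(4), which give $xy = (xy)x \geq x(yx) \geq xy$... wait, that chain needs $xy \leq x(yx)$, exactly the content of (4)'s proof. So item (6) follows directly from \Cref{prop: idem}. Finally, item (7), $xy = 0 \iff yx = 0$: by symmetry it suffices to show one direction. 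Suppose $xy = 0$. I expect the cleanest route is via the co-residual: using \Cref{prop:res iff cores} and \Cref{SasProps}, relate $xy = 0$ to a statement about $\shook$, or argue directly. The main obstacle will be item (7), since it is not a routine adjunction manipulation; the likely argument uses item (6) ($xy = x(yx)$) combined with the observation that if $yx = 0$ then $x(yx) = x \cdot 0 = x \meet \neg x = 0$, giving $xy = x(yx) = 0$ — this handles $yx = 0 \implies xy = 0$, and the converse is the same by swapping roles (note item (6) with $x,y$ swapped gives $yx = y(xy)$). So item (7) in fact reduces to item (6) plus \Cref{SasProps}(5); the apparent difficulty dissolves, and the real work is just bookkeeping to ensure each cited fact applies in the ROL setting where $\cdot$ is flexible.
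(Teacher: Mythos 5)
Your proposal is correct and takes essentially the same route as the paper's proof: items (1)--(2) follow by residuation from $x\cdot\neg x = x\meet\neg x = 0$, item (3) by the same unfolding of the Sasaki product using (1) and (2), items (4)--(5) by the same adjunction arguments, item (6) from \Cref{prop: idem}(2) and (4), and item (7) by reducing to (6) together with $x\cdot 0 = x\meet\neg x = 0$. No gaps.
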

\begin{proof}
Recall that $x(\neg x)=(\neg x)x=x\cdot 0=x\meet\neg x=0$ by \Cref{SasProps}. (1) and (2) are easy computations. For (3), observe that:
$$\begin{array}{r c l l}
(x\under y)x &=& x\under y \meet (\neg(x\under y)\vee x) & \mbox{Definition of $\cdot$}\\
&=&x\under y \meet x & \mbox{By (2)}\\
&=& x \meet (\neg x \vee x\under y)& \mbox{By (1)}\\
&=& x(x\under y) &\mbox{Definition of $\cdot$}
\end{array}
$$
Note that (4) follows immediately from (3) and the fact that $x(x\under y)\leq y$. 

For (5), the $\leq$ direction follows from \Cref{SasProps}(11). 
On the other hand, $x\under y \leq x\under (x\meet y)$ if and only if $x(x\under y)\leq x\meet y$, which holds by \Cref{SasProps}(2,9). (6) follows directly from \Cref{prop: idem}(2) and (4). Clearly, (7) follows from (6) since $yx=0$ implies $xy = x(yx)=x\cdot 0 = 0$. The converse follows symmetrically.
\end{proof}
Given a residuated ortholattice $\m A = (A,\meet,\join,\under,0,1)$, we define the following operations on $\m A$: 
$$
\begin{array}{c  c c }
\rneg x := x\under 0 && \dblr x := \rneg\rneg x\\
x \rdot y := x\meet(\rneg x \join y) && x\runder y := \rneg x\vee(x\meet y)
\end{array}
$$
Define also the sets $\rneg X=\{\rneg x: x\in X \}$ and $\dblr X = \{\dblr x: x\in X \}$ for $X\subseteq A$.

\begin{lemma}\label{lem:Rneg}
Let $\m A$ be a residuated ortholattice and let $x,y\in A$. Then $\rneg$ is antitone, and:
\begin{enumerate}[\quad(1)]
\item $\rneg 1 = 0$ and $\rneg 0=1$.
\item $x\leq \rneg\rneg x$
\item $\rneg x = \rneg\rneg \rneg x$. Hence $\rneg A \subseteq \dblr{A}$ and $\dblr{A}=\dblr{\dblr{A}}$.
\item $\rneg(x\join y)= \rneg x \meet \rneg y$. \item $\rneg \rneg x = \rneg \neg x$.
\item If $x,y\in\dblr{A}$ and $x\leq y$, then $y\rdot x = x$.
\item $\rneg x \join \rneg y = \rneg (x\meet y)$.
\end{enumerate}
\end{lemma}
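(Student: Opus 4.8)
The plan is to establish each item of \Cref{lem:Rneg} by reducing assertions about $\rneg$ to the already-available facts about $\under$ from \Cref{SasProps} and \Cref{SasRes}, exploiting the definitional identity $\rneg x = x \under 0$. First I would record antitonicity: if $x \leq y$ then $y \under 0 \leq x \under 0$ is immediate from the adjunction \eqref{eq:R} (explicitly, $x \cdot (y\under 0) \leq y \cdot (y \under 0) \leq 0$ using isotonicity of $\cdot$ in the left coordinate, which follows from \Cref{prop: idem}(2) and \Cref{SasProps}, so $y \under 0 \leq x \under 0$). For (1), $\rneg 1 = 1 \under 0 = 0$ since $1 \cdot (1 \under 0) = 1 \under 0$ by \Cref{SasProps}(4) and $1 \cdot (1\under 0) \leq 0$; and $\rneg 0 = 0 \under 0 = 1$ by \Cref{SasProps}(10).

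Next I would do the ``double negation'' items. For (2), $x \leq \rneg \rneg x$ means $x \leq (x\under 0)\under 0$, i.e. $(x\under 0)\cdot x \leq 0$; but $(x\under 0)\cdot x = x \cdot (x \under 0)$ by \Cref{SasRes}(3), and $x\cdot(x\under 0) \leq 0$ by \Cref{SasProps}(9). Item (3) then follows from (2) and antitonicity in the usual way: applying $\rneg$ to (2) (with $\rneg x$ in place of $x$) gives $\rneg\rneg\rneg x \leq \rneg x$, while (2) itself applied to $\rneg x$ gives $\rneg x \leq \rneg\rneg\rneg x$; the stated set-theoretic consequences are formal. For (5), $\rneg\rneg x = \rneg\neg x$: note $\rneg x = x \under 0 = \neg x \join (x \meet 0) \vee\ldots$ — more carefully, $x \under 0$ and $\neg x$ differ, but I would instead use \Cref{SasProps}(5,7): $\neg x \cdot 0 = \neg x \meet \neg\neg x = \neg x \meet x = x \cdot 0$, and then observe $\rneg \neg x = \neg x \under 0$ while the adjunction forces $z \under 0$ to depend on $z$ only through $z \cdot 0$; since $x\cdot 0 = \neg x \cdot 0$ and $z \under 0 = \max\{w : z\cdot 0 \cdot w \leq 0\}$ is not quite right because $\cdot$ isn't associative — so instead I would directly verify $z \under 0 \leq \neg x \under 0$ and conversely using that $x \cdot w \leq 0 \iff \neg x \cdot w \leq 0$, which should follow from $x \cdot w \leq 0 \iff w \leq x \under 0$ combined with a De Morgan computation, or more cleanly from \Cref{SasProps}(8) which gives $x \cdot w = (\neg x \vee w)\cdot x$. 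The cleanest route: $\rneg\rneg x = (x\under 0)\under 0$, and by \Cref{SasRes}(2) applied appropriately plus \Cref{SasProps}(6), since $x \under 0 = $ something whose $\neg$-complement interacts well; I would settle this by the computation $\rneg x = x\under 0 \geq \neg x$ by \Cref{SasRes}(1), and $\rneg x \leq \neg x$? — no. The honest path is to show both $x \cdot w \leq 0$ and $\neg x \cdot w \leq 0$ are equivalent and conclude $x \under 0 = \neg x \join(\text{stuff that is }0\text{ against }0)$; I expect this to work but it is the fiddliest point.

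For (4), $\rneg(x\join y) = \rneg x \meet \rneg y$: this is \Cref{SasProps}(13) specialized, since $\bigwedge$ of $\{0\}$-type — more precisely \Cref{SasRes} or the distributivity of $\under$ over meets in the second coordinate is the wrong variable; here the join is in the \emph{first} coordinate of $\under$, so I use \eqref{eq:R} directly: $w \leq (x\join y)\under 0 \iff (x\join y)\cdot w \leq 0 \iff (xw \vee yw) \leq 0$ (using \Cref{SasProps}(12), distributivity of $\cdot$ over $\vee$ in the first coordinate — wait, that is left-distributivity; the paper's \Cref{SasProps}(12) is $x \cdot \bigvee S = \bigvee xy$, distributivity in the \emph{second} coordinate). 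So instead: $(x \join y)\cdot w = (x\join y)\meet(\neg(x\join y)\vee w)$ and I would compute directly that this is $\leq 0$ iff $xw \leq 0$ and $yw \leq 0$, equivalently $w \leq x\under 0$ and $w \leq y \under 0$, i.e. $w \leq \rneg x \meet \rneg y$; taking $w = \rneg(x\join y)$ and $w = \rneg x \meet \rneg y$ gives both inequalities. Item (7), $\rneg x \join \rneg y = \rneg(x\meet y)$, follows from (4) by replacing $x,y$ with $\neg x, \neg y$, using (5) twice and De Morgan: $\rneg(x\meet y) = \rneg(\neg\neg x \meet \neg\neg y) = \rneg\neg(\neg x \join \neg y) = \rneg(\neg x \join \neg y)$? — not directly, since we'd want $\rneg\neg = \rneg\rneg$; indeed by (5), $\rneg\neg z = \rneg\rneg z$, so I would compute $\rneg(x\meet y)$ via $x \meet y = \neg(\neg x \join \neg y)$, giving $\rneg(x \meet y) = \rneg\neg(\neg x \join \neg y) = \rneg\rneg(\neg x \join \neg y)$ by (5); this is not obviously $\rneg x \join \rneg y$ unless I push through (2),(3). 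A cleaner derivation of (7): directly, $\rneg x \join \rneg y \leq \rneg(x \meet y)$ by antitonicity since $x \meet y \leq x, y$; for the reverse, use the adjunction and \Cref{SasProps}. Finally (6): if $x, y \in \dblr A$ and $x \leq y$, then $y \rdot x = y \meet (\rneg y \vee x)$; I would show $\rneg y \vee x \geq y$, equivalently (since $x \leq y$) that this meet collapses to $x$ — and here I expect to need that on $\dblr A$ the operation $\rneg$ behaves like a genuine orthocomplement, i.e. $\rneg y \vee y$ is large enough, perhaps invoking \Cref{prop:res iff cores} (that $\m A$ is an ortholattice so $\neg$ is a genuine complement) together with (5) to transfer to $\rneg$. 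The main obstacle I anticipate is item (5): pinning down exactly why $x \under 0$ and $\neg x$ generate the same ``double negation'' requires care because $\cdot$ is non-associative, and the rest of the lemma leans on (5), so getting the right lemma from \Cref{SasProps}(8) (the identity $xy = (\neg x \vee y)x$) to bridge $x \cdot w \leq 0$ and $\neg x \cdot w \leq 0$ is the crux.
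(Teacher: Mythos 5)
Your handling of antitonicity and items (1)--(3) follows the paper's route, but the justification you give for antitonicity is wrong: $\cdot$ is \emph{not} isotone in its left coordinate (in the six-element OML $\mathrm{MO}_2$ one has $a\leq 1$ yet $a\cdot b=a\not\leq b=1\cdot b$), and \Cref{prop: idem}(2) does not yield it. The device the paper uses instead --- here and again in item (4) --- is \Cref{SasRes}(7), $uv\leq 0\iff vu\leq 0$, which lets you flip the product into the coordinate where isotonicity and residuation are available: $x\cdot\rneg y\leq 0$ iff $\rneg y\cdot x\leq 0$, and $\rneg y\cdot x\leq\rneg y\cdot y\leq 0$. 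The same omission leaves a hole in your item (4): the equivalence $(x\join y)\cdot w\leq 0\iff xw\leq 0\ \&\ yw\leq 0$ is true, but it is not a ``direct computation'' on the lattice term $(x\join y)\meet(\neg(x\join y)\join w)$ --- note that $x\meet(\neg x\join w)$ need not lie below $(x\join y)\meet(\neg(x\join y)\join w)$ --- and the step that actually proves it (flip via \Cref{SasRes}(7), residuate so the join lands in the second argument of $\under$, then use item (2) together with the easy De Morgan half) never appears in your sketch.

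More seriously, your proposed route to item (5) rests on a false claim: showing that $x\cdot w\leq 0$ and $\neg x\cdot w\leq 0$ are equivalent for all $w$ amounts to $\rneg x=\rneg\neg x$, which fails in ${\m B}_6$, where the table in \Cref{fig:benzene} gives $\rneg a=a\under 0=b$ but $\rneg\neg a=\neg a\under 0=a$. The statement to be proved carries an extra $\rneg$, and the paper obtains the nontrivial inequality $\rneg\neg x\leq\rneg\rneg x$ by reducing it to $\rneg x\cdot\rneg\neg x\leq 0$ and computing $\rneg x\cdot\rneg\neg x=\rneg x\meet\rneg\neg x=\rneg(x\join\neg x)=\rneg 1=0$ via \Cref{SasRes}(2) and item (4). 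Items (6) and (7) are likewise unresolved in your proposal: for (6), showing $\rneg y\join x\geq y$ would give $y\rdot x=y$, not $x$; the actual proof uses $x=\dblr{x}$ to reduce $y\rdot x\leq x$ to $\rneg x\cdot(y\rdot x)\leq 0$ and then applies item (4) and the ortholattice identity $z\meet\neg z=0$. For (7), the only nontrivial inclusion $\rneg(x\meet y)\leq\rneg x\join\rneg y$ is left at ``use the adjunction,'' whereas the paper first establishes $a\cdot(\rneg x\join\rneg y)=a$ for $a=\rneg(x\meet y)$ and then invokes item (6) together with \Cref{SasProps}(12). So while the skeleton is right, the load-bearing steps are either missing or would fail as stated.
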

\begin{proof}
As a consequence of \Cref{SasRes}, we have $\neg x \leq \rneg x$, $\neg\rneg x \leq x\leq \rneg\neg x$, and $\rneg x\cdot x=x \meet \rneg x = x\cdot \rneg x=0$. For the antitonicity of $\rneg$, suppose $x\leq y$. 
We wish to show $\rneg y\leq \rneg x$. By residuation, it is enough to show $x\cdot \rneg y\leq 0$, or equivalently (by \Cref{SasRes}(7)), that $\rneg y\cdot x\leq 0$. Since $\cdot$ preserves the order in its right coordinate, $\rneg y\cdot x\leq \rneg y \cdot y \leq 0$.  
Therefore $\rneg$ is antitone. Note that the antitonicity of $\rneg$ immediately yields that:
\begin{equation}\tag{DM1}\label{eq:DM1}
\rneg(x\join y)\leq \rneg x \meet \rneg y,
\end{equation}
\begin{equation}\tag{DM2}\label{eq:DM2}
\rneg x \join \rneg y \leq \rneg (x\meet y).
\end{equation}
We now prove (1)--(7).

(1) By \Cref{SasProps}(4), $\rneg 1 = 1\cdot \rneg 1\leq 0$. On the other hand, $0\cdot 1 =0$ and thus $1\leq \rneg 0$.

(2) By residuation, $x\leq \rneg\rneg x =(\rneg x)\under 0$ if and only if $\rneg x \cdot x\leq 0$, which holds as noted above.

(3) Applying $\rneg$ to (2), $\rneg \dblr x \leq \rneg x$. On the other hand, $\rneg x\leq \dblr{\rneg x}$ by (2). Since $\rneg\rneg\rneg x =\rneg\dblr x = \dblr{\rneg x}$, the first claim follows. The second claim follows since $\rneg x = \rneg \dblr x$, and thus $\dblr x = \dblr{\dblr{x}}$.

(4) Using residuation and \Cref{SasRes}(7), we have $\rneg x \meet \rneg y\leq \rneg(x\join y)$ if and only if $(x\join y)(\rneg x \meet \rneg y)\leq 0$ if and only if $(\rneg x\vee \rneg y)(x\join y)\leq 0$ if and only if $x\vee y \leq \rneg(\rneg x\meet \rneg y)$.
Observe that:
$$\begin{array}{r c l l}
x\vee y &\leq& \rneg\rneg x \vee \rneg\rneg y& \mbox{By (2)}\\
&\leq& \rneg(\rneg x\meet \rneg y) & \mbox{By (\ref{eq:DM2})}. 
\end{array}$$
The claim then follows from (\ref{eq:DM1}).

(5) Since $\neg x\leq\rneg x$ by \Cref{SasRes}(1), the antitonicity of $\rneg$ gives $\rneg\rneg x \leq \rneg \neg x$. Thus we need only verify $\rneg \neg x \leq \rneg\rneg x$, or equivalently that $\rneg x \cdot \rneg \neg x\leq 0$. Observe that:
$$\begin{array}{r c l l}
\rneg x\cdot \rneg \neg x &=& \rneg x\meet (\neg\rneg x \join \rneg \neg x)&\mbox{Definition of $\cdot$}\\
&=&\rneg x\meet \rneg \neg x&\mbox{By \Cref{SasRes}(2)}\\
&=&\rneg ( x \join \neg x)&\mbox{By (4)}\\
&=&\rneg 1\\
&=& 0&\mbox{By (1)}.
\end{array}$$

(6) Clearly $a\cdot b \leq a\rdot b$ since $\neg a \leq \rneg a$. Hence $x\leq y$ implies $x=y \meet x\leq y\cdot x \leq y\rdot x$. Thus, it suffices to show $y\rdot x \leq x$. Since $x=\rneg\rneg x$ by assumption, this is equivalent to showing that $\rneg x \cdot (y\rdot x)\leq 0$. Note that $\neg \rneg x\leq x \leq y\rdot x $ by \Cref{SasRes}(2) and $x\leq y$, and observe:
$$\begin{array}{r c l l}
\rneg x \cdot (y\rdot x) &=& \rneg x\meet (\neg \rneg x \join y\rdot x)\\
&=&\rneg x \meet y\rdot x \\
&=& \rneg x \meet (y\meet (\rneg y \join x))\\
&=& (\rneg x \meet y) \meet (x \join \rneg y)\\
&=& (\rneg x \meet \rneg(\rneg y))\meet (x\join \rneg y)&\mbox{Since $y=\dblr y$}\\
&=& \rneg(x \join \rneg y) \meet (x\join \rneg y)&\mbox{By (4)}\\
&=& 0.
\end{array} $$

(7) Let $a= \rneg (x\meet y)$ and $b=\rneg x \vee \rneg y$. Since $b\leq a$ by (\ref{eq:DM2}), it is enough to verify $a\leq b$. We claim first that $ab=a$. Since $ab=a\meet (\neg a \join b)$, it suffices to show $\neg a \join b=1$, or equivalently, $a\meet \neg b \leq 0 $. Now, 
$$\neg b = \neg(\rneg x\vee \rneg y)=\neg \rneg x \meet \neg \rneg y \leq x\meet y,$$
by \Cref{SasRes}(2), and hence $a\meet \neg b \leq a\meet (x\meet y)=0$ since ${\m A}$ is an ortholattice. Now note that $\rneg x, \rneg y, $ and $a=\rneg (x\meet y)$ are contained in $\dblr A$ by (3). Furthermore, $\rneg x,\rneg y\leq a$ since $b\leq a$. Hence for $c\in \{\rneg x,\rneg y\}$, by (6) it follows that $a\rdot c=c$ and thus $a\cdot c\leq c$. Therefore,
$a =a(\rneg x \vee \rneg y)=a(\rneg x) \vee a(\rneg y) \leq \rneg x \vee \rneg y. $
\end{proof}

\begin{lemma}\label{cor:OmlRol}
Let ${\m A}$ be a residuated ortholattice. Then the following are equivalent:
\begin{enumerate}[\quad(1)]
\item $\m A$ is an OML.
\item $\m A$ satisfies $\rneg x \eq\neg x$.
\item ${\m A}$ satisfies $x\eq\rneg\rneg x$.
\end{enumerate}
\end{lemma}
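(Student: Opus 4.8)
The plan is to prove the three-way equivalence by showing $(1)\Rightarrow(2)\Rightarrow(3)\Rightarrow(1)$, since $(2)$ and $(3)$ are the natural "bridging" conditions and the hard direction is recovering orthomodularity from $x\eq\rneg\rneg x$.

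\textbf{$(1)\Rightarrow(2)$.} Suppose $\m A$ is an OML. By \Cref{prop:residuated implies ortholattice}, the residual is given by the Sasaki hook, i.e. $x\under y\eq\neg x\join(x\meet y)$. Specializing at $y=0$ and using \Cref{SasProps}(6), we get $\rneg x = x\under 0 = \neg x\join(x\meet 0) = \neg x\join 0 = \neg x$. So $\m A$ satisfies $\rneg x\eq\neg x$.

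\textbf{$(2)\Rightarrow(3)$.} If $\rneg x\eq\neg x$, then $\rneg\rneg x = \neg\neg x = x$, since $\neg$ is an involution. (Alternatively, this is immediate from \Cref{lem:Rneg}(5), which gives $\rneg\rneg x\eq\rneg\neg x$ in any ROL; combining with $\rneg x\eq\neg x$ yields $\rneg\rneg x\eq\neg\neg x\eq x$.)

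\textbf{$(3)\Rightarrow(1)$.} This is the main obstacle, and the idea is to use the "forbidden configuration" characterization. By \Cref{lem:forbidden}, it suffices to show that if $\m A$ satisfies $x\eq\rneg\rneg x$, then $\m B_6$ (Benzene) is not an ortholattice subalgebra of $\m A$. Suppose for contradiction that it is, with the labeling of \Cref{fig:benzene}. Since the residual of $\cdot$ is unique when it exists and $\m B_6$ is itself an ROL (with the residual table given in \Cref{fig:benzene}), the operation $\rneg$ computed in $\m A$, when restricted to $\m B_6$, must agree with $z\mapsto z\under 0$ read off from that table. From the table's first column we see $a\under 0 = b$ and $b\under 0 = a$, so $\rneg a = b$ and hence $\rneg\rneg a = \rneg b = a\under 0 = \wait$—here I need to be careful: $\rneg\rneg a = \rneg(\rneg a) = \rneg b = b\under 0$? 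No: $\rneg b$ means $b\under 0 = a$, so $\rneg\rneg a = a$, which is consistent. Let me instead target an element where the double negation fails: from the table, $\rneg a = a\under 0 = b$, but the involution $\neg$ in $\m B_6$ sends $a\mapsto\neg a$ with $\neg a\neq b$; and $\rneg\rneg a = \rneg b$, where $\rneg b = b\under 0 = a$. So that particular composite returns $a$. The genuinely useful observation is that \Cref{lem:Rneg}(5) forces $\rneg\rneg z\eq\rneg\neg z$ on all of $\m A$, hence on $\m B_6$; combined with hypothesis (3), $z\eq\rneg\neg z$ on $\m B_6$, i.e. $\neg z$ is mapped by $\rneg$ back to $z$. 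Taking $z = a$: $\rneg(\neg a) = a$, i.e. $(\neg a)\under 0 = a$. But the table gives $\neg a\under 0 = a$—consistent again. The cleanest route is therefore to compute $\rneg$ directly on $\m B_6$ from the residual table, observe $\rneg a = b\neq\neg a$, so that $\m B_6$ fails condition (2), then note that (2) and (3) are equivalent via \Cref{lem:Rneg}(5) as shown above, so $\m B_6$ also fails (3); but $\m B_6$ is a subalgebra of $\m A$ and the operations $\cdot$, $\under$, $\rneg$ are all determined by the ortholattice structure together with $\under$ (which is inherited), so $\m A$ fails (3) too—contradiction. Hence $\m B_6$ is not a subalgebra of $\m A$, and by \Cref{lem:forbidden}, $\m A$ is an OML.

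The subtle point to get right is that $\rneg$, being defined as $x\under 0$, genuinely transfers to the subalgebra $\m B_6$: this requires that $\m B_6$ sits inside $\m A$ as a subalgebra in the \emph{full} ROL signature (including $\under$), which is exactly what uniqueness of residuals guarantees, since the $\{\meet,\join,\neg,0,1\}$-reduct $\m B_6$ already determines $\under$ on it. Once that is pinned down, the computation $\rneg a = a\under 0 = b\neq\neg a$ on $\m B_6$ (equivalently, pointing to any place where the residual table's $0$-column disagrees with $\neg$) delivers the contradiction with hypothesis (3) after passing through the equivalence of (2) and (3).
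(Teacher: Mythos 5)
Your directions $(1)\Rightarrow(2)$ and $(2)\Rightarrow(3)$ are correct and match the paper. The gap is in $(3)\Rightarrow(1)$, precisely at the step you yourself flag as ``the subtle point'': you claim that, by uniqueness of residuals, the operation $\under$ of ${\m A}$ restricted to the ortholattice subalgebra ${\m B}_6$ must agree with the residual table of \Cref{fig:benzene}. That does not follow. Uniqueness of residuals is a statement internal to a single algebra; it does not promote an ortholattice subalgebra to an ROL subalgebra. The residual in ${\m A}$ is $x\under^{\m A}y=\max\{z\in A: x\cdot z\leq y\}$, a maximum over \emph{all} of $A$, whereas the table records $\max\{z\in B_6: x\cdot z\leq y\}$; the former can be strictly larger and need not even lie in $B_6$. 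This is exactly the pitfall the paper guards against: in \Cref{thm:term definable} it is the term-definability hypothesis that forces $\under^{\m A}$ to restrict to $B_6$, and the Caution after \Cref{omlA} is another instance of $\under$ failing to transfer. So you cannot read off $\rneg^{\m A}a=b$ from the table, and the contradiction as written is not established.

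The repair is cheap, because you only need a lower bound on $\rneg^{\m A}a$, not its exact value. Since $x\cdot y$ is an ortholattice term and ${\m B}_6$ is an ortholattice subalgebra, the computation $a\cdot b=a\meet(\neg a\join b)=a\meet b=0$ is valid in ${\m A}$; residuation then gives $b\leq a\under^{\m A}0=\rneg^{\m A}a$. Condition (3) yields condition (2) via \Cref{lem:Rneg}(3,5) (namely $\neg x=\rneg\rneg\neg x=\rneg\rneg\rneg x=\rneg x$), so $\rneg^{\m A}a=\neg a$ and hence $b\leq\neg a$, contradicting $\neg a<b$ in ${\m B}_6$. With that patch your route through \Cref{lem:forbidden} does work and is genuinely different from the paper's, which avoids the forbidden configuration entirely: under (2) and (3) one has $A=\dblr{A}$ and $\rdot$ coincides with $\cdot$, so \Cref{lem:Rneg}(6) gives the quasiequation $x\leq y\implies y\cdot x\eq x$ and \Cref{prop:residuated implies ortholattice}(2) applies directly.
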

\begin{proof}

First we show (2) and (3) are equivalent. If (2) holds then $\rneg\rneg x=\neg \neg x = x$. If (3) holds then $\neg x = \rneg\rneg(\neg x) = \rneg (\rneg \neg x)=\rneg(\rneg\rneg x)=\rneg x$ by \Cref{lem:Rneg}(5) and (3).

Now we show (1) is equivalent to (2) and (3). Supposing $\m A$ is an OML, by \Cref{prop:residuated implies ortholattice}, $\under$ and $\shook $ coincide. By \Cref{SasProps}(6), $\neg x = x\shook  0 = x\under 0 = \rneg x$. 
On the other hand, supposing (2) and (3) hold, ${\neg}$ and ${\rneg}$ coincide and $A=\dblr A$. Suppose $x,y\in A=\dblr{A}$ with $x\leq y$. Then by \Cref{lem:Rneg}(6) we have $y\rdot x = x$.  But we have $y\rdot x = y\meet (\rneg y\join x)=y\meet(\neg y\join x)=y\cdot x$, so $y\cdot x=x$. Then ${\m A}$ is an OML by \Cref{prop:residuated implies ortholattice}(2), completing the proof.
\end{proof}

\begin{lemma}\label{lem:dblr}
Let $\m A$ be a residuated ortholattice. Then for all $x,y\in A$:
\begin{enumerate}[(1)]
\item $\dblr{\dblr x}=\dblr x$ and $\dblr{\neg x}=\rneg \dblr x=\rneg x$.
\item $\dblr{x\join y}=\dblr{x}\join \dblr{y}$ and $\dblr{x\meet y}=\dblr{x}\meet \dblr{y}$.
\item $\dblr{x\cdot y}=\dblr x\rdot \dblr y$.
\end{enumerate}
\end{lemma}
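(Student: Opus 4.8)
The plan is to prove all three items by straightforward computation, relying only on the properties of $\rneg$ collected in \Cref{lem:Rneg} together with the definitions $\dblr x = \rneg\rneg x$, $x\cdot y = x\meet(\neg x\join y)$, and $x\rdot y = x\meet(\rneg x\join y)$. No new inequalities are needed; everything reduces to the negation algebra already developed.

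For item (1), I would obtain $\dblr{\dblr x}=\dblr x$ by applying $\rneg$ to both sides of the triple-negation identity $\rneg x=\rneg\rneg\rneg x$ of \Cref{lem:Rneg}(3). For the second chain of equalities, the same identity gives $\rneg\dblr x=\rneg\rneg\rneg x=\rneg x$, while $\dblr{\neg x}=\rneg\rneg(\neg x)=\rneg\neg(\neg x)=\rneg x$ follows from \Cref{lem:Rneg}(5) (which says $\rneg\rneg z=\rneg\neg z$) together with the involutivity of $\neg$.

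Item (2) amounts to two De Morgan computations: using $\rneg(a\join b)=\rneg a\meet\rneg b$ and $\rneg(a\meet b)=\rneg a\join\rneg b$ from \Cref{lem:Rneg}(4) and (7), one gets $\dblr{x\join y}=\rneg(\rneg x\meet\rneg y)=\rneg\rneg x\join\rneg\rneg y=\dblr x\join\dblr y$, and dually $\dblr{x\meet y}=\rneg(\rneg x\join\rneg y)=\rneg\rneg x\meet\rneg\rneg y=\dblr x\meet\dblr y$.

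For item (3), applying item (2) to the definition of the Sasaki product gives $\dblr{x\cdot y}=\dblr{x\meet(\neg x\join y)}=\dblr x\meet(\dblr{\neg x}\join\dblr y)$, and then item (1) rewrites $\dblr{\neg x}$ as $\rneg x$, so $\dblr{x\cdot y}=\dblr x\meet(\rneg x\join\dblr y)$. Unwinding the definition of $\rdot$ on the other side, $\dblr x\rdot\dblr y=\dblr x\meet(\rneg\dblr x\join\dblr y)$, and item (1) again identifies $\rneg\dblr x=\rneg x$, so the two sides coincide. There is no serious obstacle; the only thing to watch is that the inner $\neg$ of $x\cdot y$ is replaced by $\rneg$ under the operation $x\mapsto\dblr x$, matching precisely the $\rneg$ appearing in the definition of $\rdot$ — which is exactly the part of item (1) that makes the computation line up.
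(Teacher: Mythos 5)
Your proof is correct and follows essentially the same route as the paper's: items (1) and (2) are read off directly from \Cref{lem:Rneg}(3,5) and \Cref{lem:Rneg}(4,7) respectively, and item (3) is the same computation $\dblr{x\cdot y}=\dblr{x}\meet(\dblr{\neg x}\join\dblr{y})=\dblr{x}\meet(\rneg\dblr{x}\join\dblr{y})=\dblr{x}\rdot\dblr{y}$ using (1) and (2). Nothing further is needed.
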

\begin{proof} Clearly (1) follows from \Cref{lem:Rneg}(3,5) and (2) follows from \Cref{lem:Rneg}(4,7).
Using these facts, observe that: $\dblr{x\cdot y}=\dblr{x\meet (\neg x \join y)} =\dblr{x}\meet (\dblr{\neg x} \join \dblr{y})=\dblr x\meet(\rneg \dblr x\join \dblr y)=\dblr x\rdot \dblr y.$
\end{proof}

For a residuated ortholattice ${\m A}$, define ${\dblr{\m A}} = (\bar{A},\meet,\join,\rneg,\runder,0,1)$.

\begin{lemma}\label{omlA}
Let $\m A = (A,\meet,\join,\neg,\under,0,1)$ be a residuated ortholattice.
\begin{enumerate}[(1)]
\item $\dblr{\m A}$ is an OML.
\item The map $x\mapsto\dblr{x}$ is an ortholattice homomorphism of ${\m A}$ onto $\dblr{\m A}$.
\item $\dblr{x}\under\dblr{y}=\dblr x\runder \dblr y$ for all $x,y\in A$.
\end{enumerate}
\end{lemma}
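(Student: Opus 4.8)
The plan is to take the three parts in order; the first two are bookkeeping on the lemmas already established, and the third carries the real weight.

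For part~(1) I would first check that $\dblr A$ is closed under every operation of $\dblr{\m A}$. We have $0=\rneg 1$ and $1=\rneg 0$ in $\rneg A\subseteq\dblr A$ by \Cref{lem:Rneg}(1,3); every element of $\dblr A$ is fixed by $x\mapsto\dblr x$ by \Cref{lem:dblr}(1), so \Cref{lem:dblr}(2) gives $x\meet y=\dblr x\meet\dblr y=\dblr{x\meet y}\in\dblr A$ and dually $x\join y\in\dblr A$; the map $\rneg$ carries $A$, hence $\dblr A$, into $\rneg A\subseteq\dblr A$; and $x\runder y=\rneg x\join(x\meet y)$ is thus a join of members of $\dblr A$. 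On $\dblr A$ the operation $\rneg$ is antitone by \Cref{lem:Rneg} and is an involution by \Cref{lem:Rneg}(3) together with $\dblr x=x$ on $\dblr A$, and $x\meet\rneg x=x\cdot\rneg x=0$ holds for every $x\in A$ (as recorded in the proof of \Cref{lem:Rneg}); so $(\dblr A,\meet,\join,\rneg,0,1)$ is a bounded involutive lattice, in fact an ortholattice, whose Sasaki product is precisely $\rdot$. By the equivalence of (1) and (2) in \Cref{prop:residuated implies ortholattice}, this ortholattice is orthomodular provided $x\leq y\Rightarrow y\rdot x\eq x$ holds on $\dblr A$, which is exactly \Cref{lem:Rneg}(6). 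Finally, since by \cite{CL2017} the Sasaki product of an OML is residuated by its Sasaki hook, and the $\rneg$-Sasaki hook is precisely $\runder$, the algebra $\dblr{\m A}$ is a ROL whose residual coincides with its Sasaki hook, hence an OML by \Cref{prop:residuated implies ortholattice}(5).

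Part~(2) then follows at once from \Cref{lem:dblr}: preservation of $\meet$ and $\join$ is \Cref{lem:dblr}(2), preservation of $\neg$ is the identity $\dblr{\neg x}=\rneg\dblr x$ from \Cref{lem:dblr}(1) (the orthocomplement of $\dblr{\m A}$ being $\rneg$), and $\dblr 0=\rneg\rneg 0=\rneg 1=0$, $\dblr 1=1$ dually; the map is onto $\dblr A$ since $\dblr A$ is by definition its image and every member of $\dblr A$ is its own image.

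The real content is part~(3). Fix $x,y\in A$ and put $a=\dblr x$, $b=\dblr y\in\dblr A$; the goal is $a\under b=a\runder b$. Because the residual of $\cdot$ in $\m A$ is uniquely determined, it suffices to prove that for every $c\in A$,
$$a\cdot c\leq b\iff c\leq a\runder b,$$
which I would obtain through the chain $a\cdot c\leq b\iff a\rdot\dblr c\leq b\iff\dblr c\leq a\runder b\iff c\leq a\runder b$. The first equivalence uses, forward, that $x\mapsto\dblr x$ is an order-preserving homomorphism with $\dblr{a\cdot c}=\dblr a\rdot\dblr c=a\rdot\dblr c$ by \Cref{lem:dblr}(3) and $\dblr b=b$, and, backward, the inequalities $a\cdot c\leq a\rdot c\leq a\rdot\dblr c$ coming from $\neg a\leq\rneg a$ (\Cref{SasRes}(1)), from $c\leq\dblr c$ (\Cref{lem:Rneg}(2)), and from monotonicity of $\rdot$ in its second argument. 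The second equivalence is the residuation law of the ROL $\dblr{\m A}$ (part~(1)), whose residual is $\runder$, applied to $a,\dblr c,b\in\dblr A$. The third equivalence is immediate from $c\leq\dblr c$ and from $\dblr{a\runder b}=a\runder b$, valid since $a\runder b\in\dblr A$. I expect part~(3) to be the main obstacle: $a\under b$ need not lie in $\dblr A$ a priori, so one cannot compute directly inside $\dblr{\m A}$ and must instead characterize $a\runder b$ as the residual of $\cdot$ through its universal property---which is what forces the careful transfer across the double-negation map above.
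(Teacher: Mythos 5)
Your proof is correct and follows essentially the same route as the paper: establish that $\dblr{\m A}$ is an ortholattice whose Sasaki product is $\rdot$, invoke \Cref{lem:Rneg}(6) together with \Cref{prop:residuated implies ortholattice}(2) to get orthomodularity, read off (2) from \Cref{lem:dblr}, and derive (3) from the fact that $\runder$ is the residual of $\rdot$ in the OML $\dblr{\m A}$. The only difference is that you spell out the "straightforward computation" for (3) — the chain $a\cdot c\leq b\iff a\rdot\dblr{c}\leq b\iff \dblr{c}\leq a\runder b\iff c\leq a\runder b$ together with uniqueness of residuals — which the paper leaves implicit, and your details there are accurate.
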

\begin{proof}
For (1), note by \Cref{lem:Rneg}(1) and \Cref{lem:dblr}(2), it follows that $(\bar{A},\meet,\join,0,1)$ is hereditarily a bounded lattice. By \Cref{lem:Rneg} and \Cref{lem:dblr}(1), $\rneg$ is an antitone involution on $\dblr{\m A}$, which furthermore satisfies $x\meet \rneg x\approx 0$ by \Cref{SasRes}. Thus $\dblr{\m A}$ is an ortholattice. Noting that the Sasaki product in $\dblr{\m A}$ is $\rdot$, observe that $\dblr{\m A}$ satisfies the quasiequation $x\leq y\implies y\rdot x \approx x$ by \Cref{lem:Rneg}(6), whence by \Cref{prop:residuated implies ortholattice}(2) we have that $\dblr{\m A}$ is orthomodular. (2) is immediate from \Cref{lem:dblr}, and (3) is a straightforward computation using the fact that $\runder$ is the residual of $\rdot$ in the OML $\dblr{\m A}$.
\end{proof}

\begin{caution}
The identity $\rneg\rneg (x\under y) \approx \rneg\rneg x \under \rneg\rneg y$ is false in ${\m B}_6$ since $\dblr{a\under \neg b}=\dblr{b}=b\neq 1 = a\under a =\dblr{a}\under \dblr{\neg b}$. Thus the map $x\mapsto\bar{x}$ is not an ROL homomorphism.
\end{caution}

\subsection{The negative translation}

Let $t$ be a residuated ortholattice term, and recall that $\rneg t:= t\under 0$ and $\dblr t:=\rneg\rneg t$. We define the term $\Trs(t)$ inductively on the complexity of $t$ as follows: $\Trs(0)=0$, $\Trs(1)=1$, $\Trs(x)=\dblr x$ for all variables $x$, $\Trs(\neg s)=\rneg \Trs(s)$, and $\Trs(r\star s)= \Trs(r)\star \Trs(s)$ for each $\star\in\{\meet,\join,\under \}$. If $E$ is a set of equations, we define the translation of this set to be $\Trs[E] = \{\Trs(u)\eq\Trs(v): (u\eq v)\in E \}$. 

\begin{definition}
For subvarieties $\W,\V$ of $\rol$, we say the $\V$ is {\em translatable into $\W$} if for any sets of equations $E\cup\{s\eq t\}$ in the language of residuated ortholattices,
$E\models_\V {s\eq t} \iff \Trs[E]\models_\W \Trs(s)\eq \Trs(t). $ 
\end{definition}

The following is evident:

\begin{proposition}\label{decid}
For varieties $\W,\V$ of residuated ortholattices, if $\V$ is translatable into $\W$ then deciding equations in $\V$ is no harder than deciding equations in $\W$. In particular, if the equational theory of $\W$ is decidable then the same holds for $\V$.
\end{proposition}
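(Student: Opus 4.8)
The plan is to specialize the definition of translatability to the case of empty hypothesis sets and to observe that the translation $\Trs$ is effectively computable. First I would recall that, for any variety $\V$ of residuated ortholattices, an equation $s\eq t$ holds in $\V$ precisely when $\emptyset\models_\V s\eq t$, and that $\Trs[\emptyset]=\emptyset$ by the definition of $\Trs[E]$. Instantiating the hypothesis ``$\V$ is translatable into $\W$'' with $E=\emptyset$ therefore yields the equivalence: $\V\models s\eq t$ if and only if $\W\models\Trs(s)\eq\Trs(t)$, for all residuated ortholattice terms $s,t$.

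Next I would note that $t\mapsto\Trs(t)$ is computable: it is given by a structural recursion on $t$ that sets $\Trs(x)=(x\under 0)\under 0$ on variables, $\Trs(\neg s)=\Trs(s)\under 0$, fixes the constants $0,1$, and commutes with $\meet$, $\join$, and $\under$; moreover $|\Trs(t)|$ is bounded by a linear function of $|t|$. Hence $(s,t)\mapsto(\Trs(s),\Trs(t))$ is a (linear-time) many-one reduction of the equational theory of $\V$ to the equational theory of $\W$. Concretely, to decide whether $s\eq t$ holds in $\V$ one computes $\Trs(s)$ and $\Trs(t)$ and then invokes a decision procedure for the equational theory of $\W$ on $\Trs(s)\eq\Trs(t)$; this establishes both assertions of the proposition, the second being the special case in which the equational theory of $\W$ is decidable.

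There is no genuine obstacle here. The only points requiring any attention are that the definition of translatability quantifies over arbitrary sets $E$, so one must explicitly restrict to $E=\emptyset$ to pass from equational consequence to equational theories, and that $\Trs$ fixes the constant symbols, so that the translated hypothesis set $\Trs[\emptyset]$ is genuinely empty; both are immediate from the definitions. This is why the statement was described above as evident.
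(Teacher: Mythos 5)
Your proof is correct and fills in exactly the argument the paper leaves implicit (the paper offers no proof, declaring the proposition evident): instantiate translatability at $E=\emptyset$, note $\Trs[\emptyset]=\emptyset$, and observe that $t\mapsto\Trs(t)$ is an effectively computable (indeed linear-size) map, so $(s,t)\mapsto(\Trs(s),\Trs(t))$ is a many-one reduction of the equational theory of $\V$ to that of $\W$. No gaps.
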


For $\m A\in \rol$ and an $\rol$-term $t$ in $n$-variables, by $t^{\m A}\colon A^n\to A$ we mean the term function of $t$ on $\m A$. If $t$ is a unary term and $\vec a = (a_1,\ldots,a_n)\in A^n$, by $t^{\m A}(\vec a)$ we denote the tuple $(t^{\m A}(a_1),\ldots ,t^{\m A}(a_n))$, in particular we will write $\dblr{\vec a}$ as an abbreviation for $(\dblr x)^{\m A}(\vec a)$, i.e., $\dblr{\vec a}=(\dblr{a_1},...,\dblr{a_n})\in (\omlb{A})^n$.

\begin{lemma}\label{lem:gammaterms}
Let $t$ be a residuated ortholattice term, $\m A\in \rol$, and 
$\vec a$ an element of an appropriate power of $A$. Then
$\Trs(t)^{\m A}(\vec a) = t^{\omlb{\m A}}(\dblr{\vec a})$.
\end{lemma}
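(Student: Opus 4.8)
The plan is to proceed by structural induction on the complexity of the term $t$, showing that $\Trs(t)^{\m A}(\vec a)$ and $t^{\omlb{\m A}}(\dblr{\vec a})$ agree for every tuple $\vec a$ from an appropriate power of $A$. The base cases and inductive steps will largely be unwound from the definitions of $\Trs$, of the operations on $\dblr{\m A}$, and of the map $x \mapsto \dblr x$; the key external inputs are \Cref{omlA} (that $\dblr{\m A}$ is an OML with the operations $\rneg, \runder$, and that $x \mapsto \dblr x$ is an ortholattice homomorphism onto $\dblr{\m A}$) and \Cref{lem:dblr}.

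\textbf{Key steps.} First I would handle the constants: $\Trs(0) = 0$ and $\Trs(1) = 1$ by definition, and $0, 1$ are interpreted identically in $\m A$ and in $\dblr{\m A}$ since the latter has the same bounds (this is part of \Cref{omlA}(1), via \Cref{lem:Rneg}(1)), so $\Trs(0)^{\m A}(\vec a) = 0 = 0^{\omlb{\m A}}(\dblr{\vec a})$ and similarly for $1$. For a variable $x_i$, $\Trs(x_i) = \dblr{x_i}$, so $\Trs(x_i)^{\m A}(\vec a) = \dblr{a_i}$, which is exactly the $i$-th coordinate of $\dblr{\vec a}$, i.e. $x_i^{\omlb{\m A}}(\dblr{\vec a})$. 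For the inductive step on $\neg s$: by definition $\Trs(\neg s) = \rneg \Trs(s)$, so $\Trs(\neg s)^{\m A}(\vec a) = \rneg\bigl(\Trs(s)^{\m A}(\vec a)\bigr)$, which by the induction hypothesis equals $\rneg\bigl(s^{\omlb{\m A}}(\dblr{\vec a})\bigr)$; since the involution of $\dblr{\m A}$ is $\rneg$, this is $(\neg s)^{\omlb{\m A}}(\dblr{\vec a})$. For the binary connectives $\star \in \{\meet, \join, \under\}$: $\Trs(r \star s) = \Trs(r) \star \Trs(s)$, and $\Trs(r \star s)^{\m A}(\vec a) = \Trs(r)^{\m A}(\vec a) \star^{\m A} \Trs(s)^{\m A}(\vec a)$. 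Here one must check, for elements $u, v$ in the image $\dblr A$ (which is where the values $\Trs(r)^{\m A}(\vec a)$ and $\Trs(s)^{\m A}(\vec a)$ live, by the induction hypothesis together with \Cref{lem:Rneg}(3)), that $u \star^{\m A} v$ coincides with $u \star^{\omlb{\m A}} v$. For $\meet$ and $\join$ this is immediate because $\dblr A$ is a $\{\meet,\join\}$-subreduct (the lattice operations of $\dblr{\m A}$ are the restrictions of those of $\m A$, per \Cref{lem:dblr}(2) and \Cref{omlA}(1)). For $\under$ this is precisely the content of \Cref{omlA}(3): $\dblr u \under \dblr v = \dblr u \runder \dblr v$, and since $u, v \in \dblr A$ we have $u = \dblr u$ and $v = \dblr v$, so $u \under^{\m A} v = u \runder v = u \under^{\omlb{\m A}} v$. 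Invoking the induction hypothesis then closes the case.

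\textbf{Main obstacle.} The one point requiring genuine care—rather than bookkeeping—is the $\under$ step: it is not a priori obvious that the residual $\under$ of $\m A$, when restricted to arguments in $\dblr A$, lands in $\dblr A$ and agrees with the residual $\runder$ of the OML $\dblr{\m A}$. This is exactly why \Cref{omlA}(3) (and the subalgebra structure of \Cref{lem:dblr}(1), ensuring $\rneg$ and hence $\runder$ keep us inside $\dblr A$) is needed, and why the \emph{Caution} following \Cref{omlA} is relevant: the map $x \mapsto \dblr x$ is not an ROL homomorphism, so one cannot simply say "$\Trs$ commutes with everything because $\dblr{(\cdot)}$ is a homomorphism." Instead the argument must exploit that the arguments fed to $\under$ at each stage are already fixed points of $\dblr{(\cdot)}$, which is guaranteed by the induction hypothesis. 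Once this is recognized the proof is routine, and I would write it as a short induction citing \Cref{omlA}, \Cref{lem:dblr}, and \Cref{lem:Rneg} at the appropriate places.
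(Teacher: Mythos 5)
Your proposal is correct and follows essentially the same route as the paper: a structural induction on $t$, with the binary-operation step justified by observing that the inductive hypothesis places the subterm values in $\dblr{A}$, where $\meet$, $\join$ agree with the operations of $\dblr{\m A}$ by \cref{lem:dblr} and $\under$ agrees with $\runder$ by \cref{omlA}(3). Your added remark pinpointing the $\under$ case as the only non-bookkeeping step (and its connection to the Caution that $x\mapsto\dblr{x}$ is not an ROL homomorphism) is accurate, though the paper leaves it implicit.
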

\begin{proof}
We proceed by induction on the complexity of $t$. Observe $\Trs(0)^{\m A} = 0^{\m A}=0^{\omlb{\m A}}$ and  $\Trs(1)^{\m A} = 1^{\m A}=1^{\omlb{\m A}}$ by \Cref{omlA}. If $t$ is a variable $x$, then $\Trs(x)^{\m A}(a)=(\dblr x)^{\m A}(a)=\dblr a = x^{\omlb{\m A}}(\dblr a)$ by definition. 

Now suppose the claim holds for terms $r$ and $s$. If $t=r\star s$ where $\star\in \{\meet,\join,\under \},$ then
$$\begin{array}{ r c l l}
\Trs(r\star s)^{\m A}(\vec a)
& =& [\Trs(r)\star \Trs(s)]^{\m A}(\vec a)&\mbox{Def. of $\Trs(-)$}\\
& =& \Trs(r)^{\m A}(\vec a)\star^{\m A} \Trs(s)^{\m A}(\vec a)&\\
&=& r^{\omlb{\m A}}(\dblr{\vec a}) \star^{{\m A}} s^{\omlb{\m A}}(\dblr{\vec a}) &\mbox{Inductive hypothesis}\\
&=& r^{\omlb{\m A}}(\dblr{\vec a}) \star^{\omlb{\m A}} s^{\omlb{\m A}}(\dblr{\vec a}) & \mbox{\Cref{lem:dblr}, \Cref{omlA}(3)}\\%
&=&(r\star s)^{\omlb{\m A}}(\dblr{\vec a}).
 \end{array}$$
Essentially the same argument establishes the case for $t=\neg s$. This completes the proof. 
\end{proof}

If $\V$ is a variety and $E$ is a set of equations in the language of $\V$, we denote the subvariety of $\V$ axiomatized by $E$ by $\V+E$.

\begin{lemma}\label{target of translation}
Let $E$ be a set of equations in the language of ROLs, and set $\V={\sf OML}+E$ and $\W=\rol+\Trs[E]$. Then:
\begin{enumerate}[\quad(1)]
\item $\V$ is a subvariety of $\W$.
\item If ${\m A}\in\W$, then $\dblr{\m A}\in\V$.
\end{enumerate}
\end{lemma}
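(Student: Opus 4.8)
The plan is to prove the two parts in order, using the machinery built up in the preceding lemmas — especially \Cref{omlA} (which says $\dblr{\m A}$ is an OML, the map $x \mapsto \dblr x$ is an ortholattice homomorphism onto it, and $\dblr x \under \dblr y = \dblr x \runder \dblr y$) and \Cref{lem:gammaterms} (which says $\Trs(t)^{\m A}(\vec a) = t^{\dblr{\m A}}(\dblr{\vec a})$).

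For part (1), I want to show $\V = {\sf OML} + E \subseteq \W = \rol + \Trs[E]$. Since $\V$ and $\W$ are both subvarieties of $\rol$, it suffices to check that every ${\m A} \in \V$ satisfies every equation in $\Trs[E]$. So fix ${\m A} \in \V$; in particular ${\m A}$ is an OML satisfying $E$. By \Cref{cor:OmlRol}, in an OML we have $\rneg x \eq \neg x$ and hence $\dblr x \eq \rneg\rneg x \eq \neg\neg x \eq x$, so the map $x \mapsto \dblr x$ is the identity on ${\m A}$ and moreover $\dblr{\m A} = {\m A}$. Now take any $(u \eq v) \in E$, so that $\Trs(u) \eq \Trs(v)$ is the corresponding member of $\Trs[E]$. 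For any tuple $\vec a$ from $A$ we have $\dblr{\vec a} = \vec a$, and by \Cref{lem:gammaterms} (applied with $\dblr{\m A} = {\m A}$) we get $\Trs(u)^{\m A}(\vec a) = u^{\dblr{\m A}}(\dblr{\vec a}) = u^{\m A}(\vec a)$ and likewise for $v$. Since ${\m A} \models u \eq v$, it follows that $\Trs(u)^{\m A}(\vec a) = \Trs(v)^{\m A}(\vec a)$. Hence ${\m A} \models \Trs[E]$, so ${\m A} \in \W$, giving $\V \subseteq \W$.

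For part (2), fix ${\m A} \in \W$, i.e.\ a residuated ortholattice satisfying $\Trs[E]$; I must show $\dblr{\m A} \in \V = {\sf OML} + E$. By \Cref{omlA}(1), $\dblr{\m A}$ is already an OML, so it remains only to verify that $\dblr{\m A}$ satisfies each equation $(u \eq v) \in E$. Let $\vec b$ be an arbitrary tuple of elements of $\dblr A$. Since $\dblr A = \dblr{\dblr A}$ (by \Cref{lem:Rneg}(3) or \Cref{lem:dblr}(1)), each $b_i$ equals $\dblr{b_i}$, so $\vec b = \dblr{\vec b}$ with $\vec b$ a tuple from $A$. By \Cref{lem:gammaterms} we have $u^{\dblr{\m A}}(\vec b) = u^{\dblr{\m A}}(\dblr{\vec b}) = \Trs(u)^{\m A}(\vec b)$, and similarly $v^{\dblr{\m A}}(\vec b) = \Trs(v)^{\m A}(\vec b)$. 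Since ${\m A} \models \Trs(u) \eq \Trs(v)$ (as $(u \eq v) \in E$ and ${\m A} \in \W$), these two term values agree, so $u^{\dblr{\m A}}(\vec b) = v^{\dblr{\m A}}(\vec b)$. As $\vec b$ was arbitrary, $\dblr{\m A} \models u \eq v$; hence $\dblr{\m A} \models E$ and $\dblr{\m A} \in \V$.

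I do not anticipate a serious obstacle here — both parts are essentially a matter of chaining \Cref{lem:gammaterms} with the identification of $\dblr{\m A}$ as an OML and the idempotency $\dblr{\dblr A} = \dblr A$. The one point demanding care is making sure the tuples are handled correctly: in part (1) the key is that $\dblr{\vec a} = \vec a$ on an OML (so $\Trs$ collapses to the original term there), and in part (2) the key is that every tuple from $\dblr A$ is of the form $\dblr{\vec b}$, which is exactly what licenses the appeal to \Cref{lem:gammaterms}. Once those bookkeeping observations are in place, the rest is immediate.
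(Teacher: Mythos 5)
Your proof is correct and follows essentially the same route as the paper's: part (1) collapses $\Trs$ via $\dblr{\m A}={\m A}$ on OMLs (\Cref{cor:OmlRol}) and applies \Cref{lem:gammaterms}, and part (2) combines \Cref{omlA}(1) with the idempotency $\dblr{\dblr{A}}=\dblr{A}$ so that every tuple from $\dblr{A}$ has the form $\dblr{\vec b}$, again feeding into \Cref{lem:gammaterms}. The bookkeeping points you flag are exactly the ones the paper relies on; there is no gap.
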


\begin{proof}
We first prove (1). Clearly $\V$ is a subvariety of $\sf ROL$, so it suffices to show that if ${\m A}\in\V$ and $(u\eq v)\in E$, then ${\m A}$ satisfies $\Trs(u)\eq \Trs(v)$. Let $\vec a$ be a tuple in an appropriate power of $A$, and note that $\dblr{\vec a}={\vec a}$ and $\dblr{\m A} = {\m A}$ from \Cref{cor:OmlRol}. By hypothesis $u^{{\m A}}({\vec a}) = v^{{\m A}}({\vec a})$, so $u^{\dblr{\m A}}(\dblr{\vec a}) = v^{\dblr{\m A}}(\dblr{\vec a})$. It follows from \Cref{lem:gammaterms} that $\Trs(u)^{\m A}({\vec a})=\Trs(v)^{\m A}({\vec a})$, so ${\m A}$ satisfies $\Trs(u)\eq\Trs(v)$ as desired.

Now for (2), let ${\m A}\in\W$ and suppose that $(u\eq v)\in E$. Then ${\m A}$ satisfies $\Trs(u)\eq \Trs(v)$. If ${\vec a}$ is a tuple from an appropriate power of $\dblr{A}$, then as before ${\vec a} = \dblr{\vec a}$ by \Cref{cor:OmlRol}. By hypothesis we have $\Trs(u)^{\m A}({\vec a})=\Trs(v)^{\m A}({\vec a})$, and by \Cref{lem:gammaterms} we get $u^{\dblr{\m A}}({\vec a})=u^{\dblr{\m A}}(\dblr{\vec a})=v^{\dblr{\m A}}(\dblr{\vec a})=v^{\dblr{\m A}}({\vec a})$. It follows that $\dblr{\m A}$ satisfies $u\eq v$, so it follows that $\dblr{\m A}\in\V$.
\end{proof}

\begin{theorem}\label{main translation}
Let $\V={\sf OML}+E$, let $\W=\rol+\Trs[E]$, and suppose that $\U$ is a subvariety of ROLs such that $\V\subseteq\U\subseteq\W$. Then $\V$ is translatable into $\U$.
\end{theorem}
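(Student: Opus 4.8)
The plan is to prove the biconditional $E\models_\V s\eq t \iff \Trs[E]\models_\W \Trs(s)\eq \Trs(t)$ by relating satisfaction in an arbitrary $\m A\in\U$ to satisfaction in the orthomodular lattice $\dblr{\m A}$, using \Cref{lem:gammaterms} as the crucial bridge. The key point is that $\U$ is squeezed between $\V$ and $\W$, so algebras of $\U$ inherit structure from both ends: they satisfy $\Trs[E]$ (being in $\W$) and their negative-translation reducts $\dblr{\m A}$ land in $\V$ (by \Cref{target of translation}(2)), while every algebra of $\V$ is already in $\U$ (by \Cref{target of translation}(1)).

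For the forward direction, assume $E\models_\V s\eq t$. To show $\Trs[E]\models_\W \Trs(s)\eq \Trs(t)$, it suffices (since $\U\subseteq\W$ gives no help here — we actually want the stronger statement over $\W$, but note $\U\subseteq\W$ means consequence over $\W$ implies consequence over $\U$; we should be careful about direction) — more precisely, I would prove $\Trs[E]\models_\U \Trs(s)\eq\Trs(t)$, which combined with $\V\subseteq\U$ and the reverse direction will close the loop; but actually the cleanest route is to prove both $E\models_\V s\eq t \Rightarrow \Trs[E]\models_\W\Trs(s)\eq\Trs(t)$ and $\Trs[E]\models_\W\Trs(s)\eq\Trs(t)\Rightarrow E\models_\U s\eq t \Rightarrow E\models_\V s\eq t$ using $\V\subseteq\U$, together with $\Trs[E]\models_\W(\cdots)\Rightarrow\Trs[E]\models_\U(\cdots)$ from $\U\subseteq\W$. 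Let me instead organize it as: (i) $E\models_\V s\eq t\Rightarrow\Trs[E]\models_\W\Trs(s)\eq\Trs(t)$; (ii) $\Trs[E]\models_\W\Trs(s)\eq\Trs(t)\Rightarrow\Trs[E]\models_\U\Trs(s)\eq\Trs(t)\Rightarrow E\models_\V s\eq t$. For (i): take $\m A\in\W$ and a tuple $\vec a$ with $\Trs(u)^{\m A}(\vec a)=\Trs(v)^{\m A}(\vec a)$ for all $(u\eq v)\in E$; by \Cref{lem:gammaterms} this says $u^{\dblr{\m A}}(\dblr{\vec a})=v^{\dblr{\m A}}(\dblr{\vec a})$, and since $\dblr{\m A}\in\V$ by \Cref{target of translation}(2), we get $s^{\dblr{\m A}}(\dblr{\vec a})=t^{\dblr{\m A}}(\dblr{\vec a})$, which by \Cref{lem:gammaterms} again is $\Trs(s)^{\m A}(\vec a)=\Trs(t)^{\m A}(\vec a)$.

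For (ii): assume $\Trs[E]\models_\W\Trs(s)\eq\Trs(t)$. Let $\m B\in\V$ and $\vec a$ a tuple with $u^{\m B}(\vec a)=v^{\m B}(\vec a)$ for all $(u\eq v)\in E$. Since $\V\subseteq\U\subseteq\W$ we have $\m B\in\W$, and since $\m B$ is an OML, \Cref{cor:OmlRol} gives $\dblr{\m B}=\m B$ and $\dblr{\vec a}=\vec a$. By \Cref{lem:gammaterms}, $\Trs(u)^{\m B}(\vec a)=u^{\dblr{\m B}}(\dblr{\vec a})=u^{\m B}(\vec a)=v^{\m B}(\vec a)=\Trs(v)^{\m B}(\vec a)$, so the hypotheses of $\Trs[E]\models_\W\Trs(s)\eq\Trs(t)$ are met at $\m B$, yielding $\Trs(s)^{\m B}(\vec a)=\Trs(t)^{\m B}(\vec a)$, i.e.\ $s^{\m B}(\vec a)=t^{\m B}(\vec a)$ by \Cref{lem:gammaterms} and $\dblr{\m B}=\m B$. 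Hence $E\models_\V s\eq t$. Finally, to get the statement phrased with $\U$ rather than $\W$: the forward implication actually gives the stronger $\models_\W$ conclusion, which implies $\models_\U$ since $\U\subseteq\W$; and the backward implication only needs the weaker $\models_\U$ hypothesis, since in step (ii) we only ever evaluated at $\m B\in\V\subseteq\U$. So in fact $E\models_\V s\eq t\iff\Trs[E]\models_\U\Trs(s)\eq\Trs(t)$, as required.

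I do not anticipate a serious obstacle: the whole argument is a bookkeeping exercise once \Cref{lem:gammaterms}, \Cref{cor:OmlRol}, and \Cref{target of translation} are in hand. The one place to be careful is exactly the inclusion-chasing in the last paragraph — making sure the hypothesis and conclusion of the biconditional are each placed over the variety ($\U$) for which we have the needed closure property (algebras of $\U$ have $\dblr{\m A}\in\V$, algebras of $\V$ sit inside $\U$), rather than carelessly mixing $\W$ and $\U$. Spelling out that $\V\subseteq\U$ handles one direction and $\U\subseteq\W$ handles the other is the only subtle point.
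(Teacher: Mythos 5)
Your proposal is correct and follows essentially the same route as the paper: both directions hinge on \Cref{lem:gammaterms} to shuttle between $\m A$ and $\dblr{\m A}$, on \Cref{target of translation}(2) to place $\dblr{\m A}$ in $\V$ when $\m A\in\U\subseteq\W$, and on $\dblr{\m B}=\m B$ for $\m B\in\V\subseteq\U$ via \Cref{cor:OmlRol}; your detour of first proving the forward direction over all of $\W$ and then restricting to $\U$ is an immaterial variant of the paper's direct argument over $\U$. The one thing to fix is notational: the translatability claim quantifies over an \emph{arbitrary} premise set, which should be given a fresh name (the paper uses $D$) rather than reusing the fixed axiomatizing set $E$ of $\V$ and $\W$ --- your argument nowhere uses that the premises coincide with $E$, so it goes through verbatim after the renaming.
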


\begin{proof}
Let $D\cup \{s\eq t\}$ be a set of equations in the language of ROLs, and suppose first that $D\models_{\V} s\eq t$. Let ${\m A}\in\U$ and let $\vec a$ be a tuple of elements of the appropriate power of $A$ such that $\Trs(u)^{\m A}({\vec a}) = \Trs(v)^{\m A}({\vec a})$ holds for each equation $(u\eq v)\in D$. Then by \Cref{lem:gammaterms} we have $u^{\dblr{\m A}}(\dblr{\vec a})=v^{\dblr{\m A}}(\dblr{\vec a})$ for each $(u\eq v)\in D$. Since ${\m A}\in\U\subseteq\W$, \Cref{target of translation}(2) gives $\dblr{\m A}\in\V$. The hypothesis then implies $s^{\dblr{\m A}}(\dblr{\vec a})=t^{\dblr{\m A}}(\dblr{\vec a})$, and again applying \Cref{lem:gammaterms} yields that $\Trs(s)^{\m A}({\vec a})=\Trs(t)^{\m A}({\vec a})$. It follows that $\Trs[D]\models_{\U} T(s)\eq T(t)$ as desired.

For the converse, suppose that $\Trs[D]\models_{\U} T(s)\eq T(t)$. Let ${\m A}\in\V$, let $\vec a$ be a tuple from a suitably-chosen power of $A$, and suppose that $u^{\m A}({\vec a})=v^{\m A}({\vec a})$ for all $(u\eq v)\in D$. As before we have ${\vec a}=\dblr{\vec a}$ and ${\m A}=\dblr{\m A}$, which implies $u^{\dblr{\m A}}(\dblr{\vec a})=v^{\dblr{\m A}}(\dblr{\vec a})$. \Cref{lem:gammaterms} then gives $\Trs(u)^{\m A}({\vec a})=\Trs(v)^{\m A}({\vec a})$ for each $(u\eq v)\in D$. Since $\V\subseteq\U$ we have ${\m A}\in\U$, so the assumption gives $\Trs(s)^{\m A}({\vec a})=\Trs(t)^{\m A}({\vec a})$. A final application of \Cref{lem:gammaterms} gives $s^{\dblr{\m A}}(\dblr{\vec a})=t^{\dblr{\m A}}(\dblr{\vec a})$, so $s^{\m A}({\vec a})=t^{\m A}({\vec a})$. It follows that $D\models_{\V} s\eq t$, and this establishes that $\V$ is translatable into $\U$.
\end{proof}

In particular, we obtain the following consequence of \Cref{main translation}.

\begin{corollary}
If $\V$ is a subvariety of $\sf OML$ axiomatized relative to orthomodular lattices by a set $E$ of equations, then $\V$ is translatable into $\rol + \Trs[E]$. In particular, $\sf OML$ is translatable into $\rol$.
\end{corollary}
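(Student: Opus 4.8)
The plan is to derive this immediately from \Cref{main translation} by making the most economical choice of the intermediate variety $\U$. Write $\V = {\sf OML} + E$ and $\W = \rol + \Trs[E]$, matching the notation of \Cref{main translation}. By \Cref{target of translation}(1) we already know that $\V$ is a subvariety of $\W$, so the chain of inclusions $\V \subseteq \U \subseteq \W$ is satisfied by the choice $\U = \W$. Applying \Cref{main translation} with this $\U$ then yields that $\V$ is translatable into $\W = \rol + \Trs[E]$, which is exactly the first assertion.

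For the ``in particular'' clause, I would specialize to $E = \emptyset$. Since $\sf OML$ is itself a variety, it equals ${\sf OML} + \emptyset$, and by the definition of the translation of a set of equations we have $\Trs[\emptyset] = \emptyset$, so $\rol + \Trs[\emptyset] = \rol$. The first part of the corollary then gives that $\sf OML$ is translatable into $\rol$.

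There is essentially no obstacle here: the corollary is a bookkeeping consequence of the already-established \Cref{main translation} and \Cref{target of translation}(1). The only point requiring any care is to confirm that the hypotheses of \Cref{main translation} genuinely apply with $\U = \W$ (in particular that $\W$ really is a subvariety of $\sf ROL$ — immediate, since it is axiomatized by adding equations to $\rol$ — and that $\V \subseteq \W$, which is \Cref{target of translation}(1)), and to note that the degenerate case $E = \emptyset$ is permitted, since the statements of \Cref{main translation} and \Cref{target of translation} place no nonemptiness restriction on $E$.
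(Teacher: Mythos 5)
Your proposal is correct and matches the paper's intent exactly: the corollary is stated there as an immediate consequence of \Cref{main translation}, obtained by taking $\U=\W=\rol+\Trs[E]$ (with $\V\subseteq\W$ supplied by \Cref{target of translation}(1)), and the ``in particular'' clause by setting $E=\emptyset$. Nothing further is needed.
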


Specializing \Cref{decid} in light of \Cref{main translation}, we get the following.

\begin{corollary}
${\sf OML}$ has a decidable equational theory if any variety of residuated ortholattices that contains it has a decidable equational theory.
\end{corollary}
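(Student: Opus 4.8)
The plan is to derive this as the $E=\emptyset$ instance of \Cref{main translation}, combined with \Cref{decid}. Suppose $\W$ is a variety of residuated ortholattices with ${\sf OML}\subseteq\W$, and suppose the equational theory of $\W$ is decidable. Since $\W$ is a variety of ROLs it is in particular a subvariety of $\rol$, so we have the chain ${\sf OML}\subseteq\W\subseteq\rol$. Now take $E=\emptyset$: then $\V:={\sf OML}+E={\sf OML}$ and $\rol+\Trs[E]=\rol+\emptyset=\rol$, so, setting $\U:=\W$ and letting $\rol$ itself play the role of the variety called ``$\W$'' in the statement of \Cref{main translation}, the inclusions ${\sf OML}\subseteq\W\subseteq\rol$ are exactly the hypothesis $\V\subseteq\U\subseteq\W$ of that theorem. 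Hence \Cref{main translation} yields that ${\sf OML}$ is translatable into $\W$.

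With translatability in hand, \Cref{decid} applies directly: since ${\sf OML}$ is translatable into $\W$ and the equational theory of $\W$ is decidable, the equational theory of ${\sf OML}$ is decidable as well. Unwinding this gives the explicit algorithm: to decide whether ${\sf OML}\models s\eq t$, first compute the translated equation $\Trs(s)\eq\Trs(t)$ (effectively obtained from $s\eq t$ by the recursion on term complexity that defines $\Trs$), and then run the assumed decision procedure for $\W$ on $\Trs(s)\eq\Trs(t)$.

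I do not anticipate any genuine obstacle here: all of the mathematical substance already resides in \Cref{main translation} and the supporting results \Cref{cor:OmlRol}, \Cref{lem:gammaterms}, and \Cref{target of translation}, and this corollary merely records the elementary fact that $\rol$ is the largest variety of residuated ortholattices, so that \emph{every} variety of ROLs containing ${\sf OML}$ is automatically situated between ${\sf OML}$ and $\rol$. The only point meriting a sentence of care is that the term translation $t\mapsto\Trs(t)$ is computable, which is clear from its definition by structural recursion; this is what makes the reduction an actual algorithm rather than a bare implication between decidability statements.
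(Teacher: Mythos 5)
Your proof is correct and follows exactly the route the paper intends: take $E=\emptyset$ in \Cref{main translation} so that any variety $\U$ of ROLs containing $\sf OML$ sits in the chain ${\sf OML}\subseteq\U\subseteq\rol$, conclude translatability, and then invoke \Cref{decid}. The added remark that $\Trs$ is computable by structural recursion is a worthwhile (if standard) clarification of why the reduction is effective.
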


Of course, via \Cref{thm:algebraizable} these results may be exported to the assertional logics that are algebraized by the varieties mentioned above. However, we do not further pursue that line of inquiry here.

\bibliographystyle{eptcs}

\end{document}